\newtheorem{theorem}{Theorem}[section]
\newtheorem{lemma}[theorem]{Lemma}
\newtheorem{corollary}[theorem]{Corollary}
\theoremstyle{definition}
\newtheorem{definition}[theorem]{Definition}
\theoremstyle{remark}
\newtheorem{remark}[theorem]{Remark}
\newtheorem{example}[theorem]{Example}
\newcommand{\la}{\lambda}
\DeclareMathOperator*{\esssup}{ess\,sup}
\title[Optimal partitioning of an interval and applications\dots]{Optimal partitioning of an interval and applications to Sturm-Liouville eigenvalues}
\author{Paolo Tilli and Davide Zucco}
\address{Paolo Tilli, Dipartimento di Scienze Matematiche, Politecnico di Torino, Italy}
\email{paolo.tilli@polito.it}
\address{Davide Zucco, Istituto Nazionale di Alta Matematica, Unit\`a di Ricerca del Dipartimento di Scienze Matematiche, Politecnico di Torino, Italy}
\email{davide.zucco@polito.it}
\begin{document}

\begin{abstract}
We study the optimal partitioning of a (possibly unbounded) interval of
the real line into $n$ subintervals in order to minimize the maximum
of certain set-functions, under rather general assumptions such as
continuity, monotonicity, and a Radon-Nikodym property.
We prove existence and uniqueness of a solution to this minimax partition problem, showing that the values of the 
set-functions on the intervals of any optimal partition must coincide. We also investigate 
the asymptotic distribution of the optimal partitions as $n$ tends to infinity.
Several examples of set-functions fit in this framework, including measures,
weighted distances and eigenvalues. We recover, in particular, some classical 
results of Sturm-Liouville theory: the asymptotic distribution of the zeros of the 
eigenfunctions, the asymptotics of the eigenvalues, and the celebrated Weyl law on the 
asymptotics of the counting function.

\bigskip
\noindent \textbf{Keywords:} optimal partitioning, set function, Sturm-Liouville eigenvalue, minimax problem, fair division problem.
\smallskip

\noindent{\textbf{2010 MSC:} 28B15, 34B24, 34L20, 49K35, 62C20.}
\end{abstract}

\maketitle

\section{Introduction}

The problem of best partitioning a measurable space has received lot of attention in the last decades, since its importance from theory to applications. On one side, rigorous studies of partition problems require advanced mathematical tools; on the other, optimal partitions arise in several concrete applications, such as in discrete allocation problems, in statistical decision theory, and in phenomena of spatial segregation in reaction-diffusion systems.
For some works on the subject one can consult \cite{bonhel,bfvv,caflin,coteve,dalewi,dubspa,elhike, hehote, lewi} and the references therein. 

The present paper starts from the easy consideration that most of the results in the literature are set up for optimal partition problems in a general higher-dimensional framework. Our aim is, on the contrary, to focus on optimal partition problems in \emph{one dimension} (in fact this is  an ongoing project that we initiated in \cite{tilzuc} looking for spectral partitions that minimize the sum of the eigenvalues of certain Sturm-Liouville problems). A crucial fact in one dimension is that each partition of an interval may be identified by the points that induce the partition. In these terms, an optimal partition problem can be equivalently regarded as an \emph{optimal location problem}, namely with  points as unknowns (see, for instance, \cite{bojima,busava, lmsz,suzdre,suzoka}).

To set up our framework let us introduce some notation. Given $-\infty\leq a<b\leq +\infty$ and a natural number $n$ we denote by $I:=(a,b)$ a generic (possibly unbounded) open subinterval of $\mathbb R$ (one may also consider the whole real line or a half-line) and by
\[
\mathcal C_n(I):=\big\{\{I_j\}:\, I_j=(x_{j-1},x_j)\text{ with $x_{j-1}\leq x_j$ for $1\leq j\leq n$, $x_0:=a$, $x_{n}:=b$}\big\},
\]
the class of partitions of $I$ made up of $n$ open intervals (notice that some intervals of a partition in $\mathcal C_n(I)$ may be empty, while the non-empty ones are disjoint).
Moreover, we consider a family of set-functions $\{f_j\}_{j=1}^n$, each function $f_j$ being defined on the subintervals of $I$ and satisfying some abstract conditions (among them we require a monotonicity with respect to domain inclusion and, for the asymptotics, a Radon-Nikodym condition).
Then, to each interval $I_j$ of a partition in $\mathcal C_n(I)$, we may associate the real number $f_j(I_j)$, and study the problem of minimizing the maximum of the $f_j(I_j)$: in formulae 
\begin{equation}\label{problem}
\min_{\{I_j\}\in \mathcal C_n(I)}\max_{1\leq j\leq n} f_j(I_j).
\end{equation}

Due to the abstract framework it is clearly not possible to characterize every solution of this \emph{minimax partition problem} just in terms of some geometrical quantities. 
As a first step it is then important to investigate questions concerning the existence of an optimal partition, its characterization via some optimality condition, the uniqueness, as well as the asymptotic distribution of the minimizing sequences as the number of intervals of the partition goes to infinity. By taking advantage of the one dimensional framework we are able to give a satisfactory answer to these issues.

In Section~\ref{sec.2} we focus on the problem when the number $n$ of intervals of the partition is fixed. As a preliminary step of our analysis we introduce the hypotheses on the functions $f_j$ that guarantee the existence of an optimal partition: continuity, domain monotonicity and a compatibility condition for consecutive functions.
Then we derive the optimality condition \eqref{optimality}: minimizers of \eqref{problem} are characterized by the fact that the values $f_j(I_j)$ of the functions on the intervals of the partition must coincide.
This optimality condition is very robust since it allows to obtain useful information on the minimizers. As a first companion result of  \eqref{optimality} we deduce, in the case of strictly monotone set-functions, the uniqueness of an optimal partition. All these results are summarized in Theorem~\ref{teo.main}. Moreover, for specific choices of the functions $f_j$, we can find out as solution the uniform partition, see Corollary~\ref{cor.uniform}. As a byproduct of the general monotonicity assumptions we prove in Corollary \ref{cor.maxmin} the equivalence of \eqref{problem} with the \emph{maximin partition problem}
\begin{equation}\label{problem2}
\max_{\{I_j\}\in \mathcal C_n(I)}\min_{1\leq j\leq n} f_j(I_j).
\end{equation}

In Section~\ref{sec.3} we then analyze the asymptotics of the minimizers. As $n$ increases, the points that identify the optimal partition tend to fill the whole interval $I$, and in the limit any information concerning the density (i.e. number of these points for unit length) is lost.  The common strategy used to retrieve this information is to prove a $\Gamma$-convergence result in the space of probability measures. However, thanks again to the optimality condition \eqref{optimality} we are able to bypass the technicalities of $\Gamma$-convergence (cf. with \cite{tilzuc} where this strategy was not possible).  We prove in Theorem~\ref{teo.main2} the asymptotics of the minimizers and identify the distribution of the points in the limit. 

Both sections are supplied by several set-functions that satisfy the hypotheses that we require for the validity of the results: measures, weighted distances, and eigenvalues, each one originating and connecting different optimization problems.
The last Section~\ref{sec.5} contains a remarkable example. We study a maximization problem for the first eigenvalue of a Sturm-Liouville problem (see \cite{henzuc,tilzuc,tilzuc2} for related two dimensional versions of this problem) and we derive by purely variational techniques, some classical results of the Sturm-Liouville theory: the asymptotic distribution of the zeros of the eigenfunctions, the asymptotics of the eigenvalues and the well-celebrated \emph{Weyl law}, concerning the asymptotics of the counting function.

Eventually, we point out that most of our proofs are constructive, following \emph{iterative criterions} which could be numerically implemented to find out the solution in more complex situations.

\smallskip

\paragraph{\textbf{Notation}} The word interval always stands for \emph{open} interval. The symbols $\subseteq$ and $\subsetneq$ are the inclusion and the strict inclusion among sets.  The limit $\lim_{J\downarrow x}$ is computed along sequences of \emph{bounded} intervals $J$ that shrink around the point $x$. The sequence $\{a_j^n\}$ stands for $\{a_j^n\}_j$, i.e. a sequence with respect to the parameter $j$ while $n\in\mathbb N$ is fixed. For $r\in [1,+\infty]$  the classical \emph{Lebesgue} and \emph{Sobolev spaces} are respectively denote by $L^r(I)$ and $W^{1,r}(I)$. The \emph{counting measure} and the \emph{Lebesgue measure} are respectively denoted by $\sharp$ and $\mathcal L$.

\section{Optimal partitions for fixed $n$}\label{sec.2}

We study in this section the minimax problem \eqref{problem} when the number $n$ of intervals of the partition is \emph{fixed}. The family of set-functions $\{f_j\}$ considered in the problem must satisfy suitable abstract conditions, that we introduce in the following with some notation.

\begin{definition}[Set-function]\label{def.set}
A \emph{set-function} $f$ is a function defined on the open subintervals of $I$ whose range is in $[0,+\infty]$.
\end{definition}

\begin{remark}\label{rem.pos}
A set-function is tacitly assumed to be positive with $+\infty$ as admissible value. With minor changes, all the results of the paper should hold without this positivity assumption (we assume it to simplify the exposition).
\end{remark}

\begin{definition}[Continuity]\label{def.con}
A set-function $f$ is said \emph{continuous} if the function of two variables that to every $(x,y)\in \overline I\times \overline I$ with $x\leq y$ associates the quantity $f((x,y))$ is continuous.
\end{definition}

In other words, a set-function $f$ is continuous with respect to the movements of the endpoints of the interval: if $(x_n,y_n)\to (x,y)$ then $f((x_n,y_n))\to f((x,y))$.

\begin{definition}[Monotonicity]\label{def.mon}
A set-function $f$ is said \emph{increasing} if $f(I_1)\leq f(I_2)$ for all intervals $I_1\subseteq I_2\subseteq I$.
A set-function $f$ is said \emph{decreasing} if, on the contrary, $f(I_1)\geq f(I_2)$ for all intervals $I_1\subseteq I_2\subseteq I$. A  set-function is simply said \emph{monotone} if it is increasing or decreasing. When the previous inequalities are always strict (except of course for the case $I_1=I_2$) we specify and write strictly increasing, strictly decreasing, or  strictly monotone set-functions.
\end{definition}

\begin{remark}[Measures]\label{rem.meas}
The notion of continuity in Definition~\ref{def.con} slightly differs from the one which usually is used in the context of Measure Theory.\footnote{A measure $\mu$ is said continuous (see for instance \cite[Theorem 1.2]{evagar} and \cite[Definition 2.37]{waki}) if $I_1\subseteq\dots \subseteq I_k\subseteq I_{k+1} \dots$ then $\lim_k(I_k)=\mu(\cup_k I_k)$ and if $I_1\supseteq\dots \supseteq I_k\supseteq I_{k+1} \dots$ with $\mu(I_1)< +\infty$ then $\lim_k\mu(I_k)=\mu(\cap_k I_k)$.} 
Indeed any Dirac delta supported at a point $x\in I$, and more generally any positive measure on $I$ with atoms, is not continuous according to Definition~\ref{def.con}. Also the Lebesgue measure $\mathcal L$ is not continuous on the entire real line, since e.g. $\lim_{x\to+\infty}\mathcal L((x,+\infty))\neq \mathcal L(\emptyset)$.
By the way, every non-atomic probability measure on $\overline I$ is a continuous and increasing set-function (it may not necessarily be strictly increasing). 
\end{remark}

Others important examples of continuous and monotone set-functions are the following ones.

\begin{remark}[Distances]\label{rem.dist}
Let $r\in [1,+\infty)$ and $\rho\in L^1(I)$ be continuous and positive (i.e. $\rho>0$) with $I$ bounded. Then one may consider the \emph{average distance} functional that to every subinterval $J\subseteq I$ associates the quantity
$$\int_J \rho(x) \mathrm{d}_{\partial J}(x)^r dx,$$
or consider the \emph{maximum distance} functional, its non-local counterpart which associates
$$\max_{x\in J }(\rho(x)\mathrm{d}_{\partial J}(x)),$$
where $\mathrm{d}_{\partial J}$ denote the distance function from the boundary $\partial J$. 
As set-functions these functionals are continuous and increasing, according to Definitions~\ref{def.con} and \ref{def.mon}.
\end{remark}

Measures have a stronger property: they are \emph{additive} set-functions.
Remarkable examples of non-additive set-functions are the following ones. 
 
\begin{remark}[Eigenvalues]\label{rem.eigen}
Let ${1}/{p},q,w\in L^1(I)$ with $p,w>0$ a.e. on $I$. Consider on a subinterval $J\subseteq I$ the self-adjoint \emph{Sturm-Liouville problem} consisting of the \emph{Sturm-Liouville equation}
\[
-(p u')'+qu=\la w u\quad  \text{on $J$},
\] 
with the self-adjoint boundary condition $u=0$ on $\partial J$ (these are the so-called \emph{Dirichlet boundary conditions}) with $\lambda\in \mathbb C$ the spectral parameter. It is well known \cite{atk} that the problem has infinitely many eigenvalues $\lambda_n$ (with associated eigenfunctions $u_n$), all of which are real, numbered to form a non-decreasing sequence,
\[
-\infty<\la_1\leq \la_2\leq  \dots\leq \la_n\leq\dots
\]
approaching $+\infty$. Then each eigenvalue $\la_n=\la_n(J)$ is a set-function depending on the interval $J$ according to Definition~\ref{def.set} and Remark~\ref{rem.pos} (indeed it becomes a positive set-function up to a suitable translation of $\la_1(I)$, independent of $J$). 
It is known \cite[Lemma 2.1]{kowuze} that each of these eigenvalues is continuous according to Definition~\ref{def.con} and
\cite[Remark 4.4.5]{zettl} that it is strictly decreasing according to Definition~\ref{def.mon}. More general separated boundary conditions can be considered (including the \emph{Neumann boundary conditions}) but for the monotonicity of the eigenvalues some further assumptions on the coefficients $p,q,w$ are needed (it suffices for instance the boundedness of $q/w$, see again \cite[Remark 4.4.5]{zettl}).
\end{remark}

Since in \eqref{problem} we allow different set-functions on each interval of a partition, a compatibility condition is also needed.  

\begin{definition}[Compatibility]\label{def.com}
Two monotone set-functions $f_{j_1}$ and $f_{j_2}$ are \emph{compatible} if they have the same monotonicity (they are both increasing or both decreasing) and if they assume the same value on the empty set, namely $f_{j_1}(\emptyset)=f_{j_2}(\emptyset)$.
\end{definition}

\begin{remark}\label{rem.com}
The compatibility condition is an equivalence relation, in particular a monotone set-function is compatible to itself.
\end{remark}

\begin{remark}\label{rem.equ}
Let $f_{j_1}$ and $f_{j_2}$ be compatible according to Definition~\ref{def.com}. 
In the case $f_{j_1}$ and $f_{j_2}$ are increasing then $f_{j_1}(\emptyset)\leq f_{j_2}(J)$ for all intervals $J\subseteq I$. On the contrary, in the case $f_{j_1}$ and $f_{j_2}$ are decreasing then 
$f_{j_1}(J)\leq f_{j_2}(\emptyset)$ for all intervals $J\subseteq I$.
\end{remark}

For fixed $n$ we have good information on the minimizer of \eqref{problem}.

\begin{theorem}[Existence, uniqueness, and the optimality condition]\label{teo.main}
Let $n\in\mathbb N$ with $n>1$ and $\{f_j\}_{j=1}^n$ be a family of set-functions such that, for every $1\leq j\leq n$,
\begin{itemize}
\item $f_j$ is continuous and monotone;
\item $f_j$ and $f_{j+1}$ are compatible.
\end{itemize}
There exists a solution to the minimax partition problem \eqref{problem}. A partition $\{I_j\}\in \mathcal C_n(I)$ solves \eqref{problem} if and only if
\begin{equation}\label{optimality}
f_{j_1}(I_{j_1})=f_{j_2}(I_{j_2}),\quad  \text{for all $1\leq j_1,j_2\leq n$.}
\end{equation}
If, in addition, all the set-functions $f_j$ are \emph{strictly} monotone, then the solution is unique.
\end{theorem}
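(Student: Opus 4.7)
My plan is to dispatch, in order, (i) existence, (ii) the construction of an equal-value partition, (iii) the two directions of the optimality characterization \eqref{optimality}, and (iv) uniqueness. Compatibility being transitive (Remark~\ref{rem.com}), the whole family $\{f_j\}$ shares a common monotonicity type and a common value $c := f_j(\emptyset)$; I work in the increasing case, the decreasing case being analogous. A partition is encoded by a tuple $(x_1, \dots, x_{n-1})$ with $a \le x_1 \le \dots \le x_{n-1} \le b$, forming a compact subset $K$ of $\overline I^{\,n-1}$ (using the two-point compactification if $I$ is unbounded); the objective $\max_j f_j((x_{j-1}, x_j))$ is continuous on $K$ by Definition~\ref{def.con}, so Weierstrass yields a minimizer with value $t^*$.

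To produce an equal-value partition, I use an iterative filling. For each level $t \ge c$, set $y_0(t) := a$ and
\[
y_j(t) := \sup\bigl\{y \in [y_{j-1}(t), b] : f_j((y_{j-1}(t), y)) \le t\bigr\}, \qquad 1 \le j \le n.
\]
Continuity and monotonicity of each $f_j$ propagate to make $y_j(\cdot)$ continuous and non-decreasing, and $y_n(t) = b$ is equivalent to the existence of a partition with max at most $t$; thus the set of admissible levels is exactly $[t^*, \infty)$. At $t = t^*$ the partition $\tilde I_j := (y_{j-1}(t^*), y_j(t^*))$ belongs to $\mathcal C_n(I)$ with $f_j(\tilde I_j) \le t^*$ for every $j$, and whenever $y_j(t^*) < b$ the supremum definition together with continuity forces $f_j(\tilde I_j) = t^*$. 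Degenerate indices at which $y_j(t^*) = b$ with $j < n$ produce empty tail intervals, a situation I handle by showing that then necessarily $c = t^*$, so all values coincide regardless.

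The iff in \eqref{optimality} rests on a right-endpoint comparison. Suppose $\{I_j\}$ has all values equal to $t$ and $\{J_j\}$ satisfies $\max_j f_j(J_j) < t$; writing $x_j$ and $\tilde y_j$ for their respective right endpoints, I argue by induction that $\tilde y_j < x_j$. For $j = 1$, monotonicity together with $f_1(J_1) < t = f_1(I_1)$ rules out $J_1 \supseteq I_1$, giving $\tilde y_1 < x_1$. For the inductive step, $\tilde y_{j-1} < x_{j-1}$ combined with $\tilde y_j \ge x_j$ would force $J_j \supseteq I_j$ and hence $f_j(J_j) \ge t$, a contradiction. Iterating to $j = n$ yields $\tilde y_n < b$, contradicting $\tilde y_n = b$. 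This establishes the ``if'' direction and pins the common value of every equal-value partition to $t^*$. The ``only if'' direction compares an arbitrary optimal $\{I_j\}$ to the equal-value $\tilde I_j$ via the same propagation: a strict inequality $f_{j_0}(I_{j_0}) < t^*$ at some index transmits a right-endpoint lag that, combined with the supremum nature of the filling, must reach $x_n < b$ and so contradicts $x_n = b$.

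Finally, uniqueness under strict monotonicity is immediate: two minimizers are each equal-value with common value $t^*$, and $f_1(I_1) = f_1(\tilde I_1) = t^*$ combined with strict monotonicity forces $I_1 = \tilde I_1$; induction then extends this to every $j$. I expect the main technical hurdle to lie in the ``only if'' direction without strict monotonicity, where the nesting argument only yields weak right-endpoint inequalities and one must invoke the continuity and supremum structure of the filling to exclude minimizers whose values might differ on ``flat'' stretches of some $f_j$.
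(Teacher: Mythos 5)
Your route is genuinely different from the paper's. The paper splits the characterization into two separate arguments: necessity is proved by perturbing the shared endpoint of a maximal interval and an adjacent non-maximal one, and sufficiency is proved via the combinatorial inclusion Lemma~\ref{lemma:incl} together with the monotonicity chains \eqref{abstract1}--\eqref{abstract2}. You instead build an equal-value partition by greedy filling and run endpoint-propagation inductions. Your existence step, your ``if'' direction, and your uniqueness step are correct; the ``if'' induction is in substance a streamlined, constructive version of Lemma~\ref{lemma:incl}, and your uniqueness argument (nested first intervals with equal values under strict monotonicity) is if anything simpler than the paper's.

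The gap is exactly where you feared, and it is not merely technical. Three of your claims fail for merely monotone $f_j$: (i) $y_j(\cdot)$ is a generalized inverse of $t\mapsto f_j((y_{j-1},t))$ and is \emph{not} continuous across a flat stretch of $f_j$; (ii) in the degenerate case $y_j(t^*)=b$ with $j<n$ it is not true that $c=t^*$; (iii) in your ``only if'' propagation, a strict inequality at one index transmits only \emph{weak} right-endpoint inequalities to the later indices, so you never reach $x_n<b$. A single example breaks all three: take $I=(0,1)$, $n=2$, $f_1=f_2=f$ with $f(J):=\min(\mathcal L(J),1/2)$, which is continuous, increasing and self-compatible. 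Every partition has $\max_j f(I_j)=1/2$, so every partition is optimal, yet $x_1=1/4$ gives the values $1/4\neq 1/2$; moreover the filling at level $t^*=1/2$ returns $y_1(t^*)=1$, so $\tilde I_2=\emptyset$ with value $0=c<t^*$, and $y_1(\cdot)$ jumps at $t^*$. Hence the necessity of \eqref{optimality} cannot be deduced from continuity and mere monotonicity at all, and no completion of your sketch is possible without strict monotonicity (or some hypothesis excluding flat stretches); under strict monotonicity all three steps can be repaired (strictness restores the continuity of $y_j(\cdot)$, forces $c<t^*$ so the degenerate branch cannot occur, and propagates the strict endpoint inequality to $j=n$). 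You should also be aware that the paper's own necessity argument leans on the same point: the perturbation step asserts that $f_{j_1}(I_{j_1})$ ``is decreased'', which mere monotonicity only guarantees weakly, so in the case $\sharp(O)=1$ the value $M$ need not actually drop. If you write your proof up, state the ``only if'' direction (and hence the full characterization) under strict monotonicity.
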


The transitivity of the compatibility condition (see Remark~\ref{rem.com}) ensures the functions of the family $\{f_j\}$ to have the same monotonicity (they are all increasing or all decreasing).

To prove this theorem we need the following combinatorial result.

\begin{lemma}\label{lemma:incl}
Given $n\in \mathbb N$ with $n>1$, let $\{I_j\}$ and $\{\widehat I_j\}$ be partitions in $\mathcal C_n(I)$.
There exist two indices $j_1$ and $j_2$ such that
\[
I_{j_1}\subseteq \widehat I_{j_1} \qquad \text{and}\qquad \widehat I_{j_2}\subseteq I_{j_2}.
\]
Moreover, if $\{I_j\}$ and $\{\widehat I_j\}$ are distinct partitions, then one can require these inclusions to be strict.
\end{lemma}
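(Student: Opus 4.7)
The plan is to identify each partition with its vector of endpoints $x_0 = a \le x_1 \le \dots \le x_n = b$ (resp.\ $\widehat x_0 \le \dots \le \widehat x_n$) and to work with the displacement sequence $\phi(j) := x_j - \widehat x_j$, which vanishes at $j = 0$ and $j = n$. First I would translate each set inclusion into a sign condition on consecutive values of $\phi$: the inclusion $I_j \subseteq \widehat I_j$ is equivalent to $\phi(j-1) \ge 0 \ge \phi(j)$, while $\widehat I_j \subseteq I_j$ is equivalent to $\phi(j-1) \le 0 \le \phi(j)$. So the lemma reduces to a discrete intermediate-value statement: the sequence $\phi$, which starts and ends at $0$, must exhibit both a down-crossing and an up-crossing of $0$.

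I would then dispose of the trivial case $\phi \equiv 0$, in which the two partitions coincide and any index serves as both $j_1$ and $j_2$ (with equalities throughout). Otherwise $\phi$ is nonzero somewhere; since the statement is symmetric under swapping the two partitions (which exchanges the roles of $j_1$ and $j_2$), I may assume without loss of generality that $\phi$ attains a strictly positive value. Letting $k$ be the smallest and $\ell$ the largest index in $\{0, 1, \dots, n\}$ with $\phi(\cdot) > 0$, the boundary conditions $\phi(0) = \phi(n) = 0$ force $1 \le k \le \ell \le n - 1$, and by the extremality of $k$ and $\ell$ one obtains $\phi(k-1) \le 0 < \phi(k)$ together with $\phi(\ell) > 0 \ge \phi(\ell + 1)$. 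Setting $j_2 := k$ and $j_1 := \ell + 1$ then produces the required inclusions.

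For the strict version I would observe that the strict inequalities $\phi(k) > 0$ and $\phi(\ell) > 0$ already displace one endpoint of each interval, which is enough to deduce strict inclusion of open intervals even in the degenerate case where one of the two intervals in a pair happens to be empty. I expect no substantial obstacle here; the only care required is not confusing orientation when passing between endpoint inequalities and set inclusions, and checking that the convention $\subsetneq$ is respected in the degenerate cases.
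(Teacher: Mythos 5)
Your argument is correct and is essentially the paper's proof in different notation: your set $\{j:\phi(j)>0\}$ is the paper's $R$, your choices $j_2=\min R$ and $j_1=\max R+1$ are exactly the paper's, and your symmetry/WLOG reduction simply replaces the paper's explicit second case $L\neq\emptyset$. (One small imprecision: the sign conditions on $\phi$ are sufficient for, not equivalent to, the inclusions---an empty $I_j$ is contained in $\widehat I_j$ regardless of $\phi$---but you only use the sufficient direction, so nothing breaks.)
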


\begin{proof}
Let $\{x_j\}$ and $\{\widehat x_j\}$ be the sets made up of $(n-1)$ points of $I$ that define the partitions $\{I_j\}$ and $\{\widehat I_j\}$, respectively.
Consider the sets  
\[
L:=\{j\in \mathbb N:\; 1\leq j\leq (n-1),\; x_j<\widehat x_j\}
\] 
and
\[
R:=\{j\in \mathbb N:\; 1\leq j\leq (n-1),\; x_j>\widehat x_j\}.
\]
Apply then the following procedure (see Figure~\ref{fig.com} for an example), recalling that by definition $x_0=\widehat x_0$ and $x_n=~\widehat x_n$.
\begin{itemize}
\item[-] If $L=R=\emptyset$ then $I_j=\widehat I_j$ for all $j$ and one may choose $j_1=j_2=1$.
\item[-] Else if $L\neq \emptyset$ then we may choose $j_1=\min L$ and $j_2=(\max L+1)$. Indeed in this case $x_{j_1}<\widehat x_{j_1}$ and $x_{j_2}\geq \widehat x_{j_2}$ which, together with the optimality of $j_1$ and of $j_2$, yield $x_{j_1-1}\geq \widehat x_{j_1-1}$ and $x_{j_2-1}< \widehat x_{j_2-1}$. 
\item[-] Else if $R\neq \emptyset$ then we may choose $j_1=(\max R+1)$ and $j_2=\min R$. Indeed in this case $x_{j_1}\leq\widehat x_{j_1}$ and $x_{j_2}> \widehat x_{j_2}$ which, together with the optimality of $j_1$ and of $j_2$, imply $x_{j_1-1}> \widehat x_{j_1-1}$ and $x_{j_2-1}\leq \widehat x_{j_2-1}$. 
\end{itemize}
Then $I_{j_1}=( x_{j_1-1},x_{j_1})\subseteq (\widehat x_{j_1-1}, \widehat x_{j_1})= \widehat I_{j_1}$ and $\widehat I_{j_2}=(\widehat x_{j_2-1},\widehat x_{j_2})\subseteq ( x_{j_2-1},  x_{j_2})=  I_{j_2}$.
Note that the inclusions can not be strict only in the case $L=R=\emptyset$.
\end{proof}

\begin{figure}[t]
\begin{tikzpicture}
\draw (0,0) -- (10,0);
\draw (0,-1) -- (10,-1);

\coordinate (a0) at (0,0) node at (0,-0.3) {\small $x_0$};
\coordinate (a1) at (1,0) node at (1,-0.3) {\small $x_1$};
\coordinate (a2) at (2,0) node at (2,-0.3) {\small $x_2$};
\coordinate (a3) at (3,0) node at (3,-0.3) {\small $x_3$};
\coordinate (a4) at (8.5,0) node at (8,-0.3) {\small $x_4$};
\coordinate (a5) at (10,0) node at (10,-0.3) {\small $x_5$};

\coordinate (b0) at (0,-1) node at (0,-1.3) {\small $\widehat x_0$};
\coordinate (b1) at (1,-1) node at (1,-1.3) {\small $\widehat x_1$};
\coordinate (b2) at (4,-1) node at (4,-1.3) {\small $ \widehat x_2$};
\coordinate (b3) at (6,-1) node at (6,-1.3) {\small $\widehat x_3$};
\coordinate (b4) at (8,-1) node at (8,-1.3) {\small $\widehat x_4$};
\coordinate (b5) at (10,-1) node at (10,-1.3) {\small $ \widehat x_5$};

\node at (0.5,0.2) {\small $I_1$};
\node at (1.5,0.2) {\small $I_2$};
\node at (2.5,0.2) {\small $I_3$};
\node at (5.5,0.2) {\small $I_4$};
\node at (9.25,0.2) {\small $I_5$};

\node at (0.5,-0.75) {\small $\widehat I_1$};
\node at (2.5,-0.75) {\small $\widehat I_2$};
\node at (5,-0.75) {\small $\widehat I_3$};
\node at (7,-0.75) {\small $\widehat I_4$};
\node at (9,-0.75) {\small $\widehat I_5$};

\draw [fill=black] (a0) circle (1.5pt);
\draw [fill=black] (a1) circle (1.5pt);
\draw [fill=black] (a2) circle (1.5pt);
\draw [fill=black] (a3) circle (1.5pt);
\draw [fill=black] (a4) circle (1.5pt);
\draw [fill=black] (a5) circle (1.5pt);

\draw [fill=black] (b0) circle (1.5pt);
\draw [fill=black] (b1) circle (1.5pt);
\draw [fill=black] (b2) circle (1.5pt);
\draw [fill=black] (b3) circle (1.5pt);
\draw [fill=black] (b4) circle (1.5pt);
\draw [fill=black] (b5) circle (1.5pt);

\end{tikzpicture}
\caption{Example of Lemma~\ref{lemma:incl} with $n=5$; here $j_1=2$ and $j_2=4$.}\label{fig.com}
\end{figure}
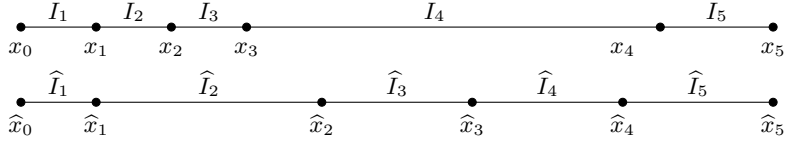

It is interesting to notice that the inclusions in Lemma~\ref{lemma:incl} hold for couples of intervals $I_j$ and $\widehat I_j$ labeled with the same indeces. This lemma is crucial in the proof of Theorem~\ref{teo.main}.

\begin{proof}[Proof of Theorem~\ref{teo.main}]
We first prove the existence of a solution and then that the optimality condition \eqref{optimality} is necessary and sufficient. Combining  \eqref{optimality} with the strict monotonicity assumption we finally derive the uniqueness of the solution.
\smallskip

\emph{Existence.} Let $\mathcal S\subseteq \mathbb R^{n-1}$ be the compact simplex defined by
\[
\mathcal S:=\left\{(x_1,\dots, x_{n-1})\in \mathbb R^{n-1}: \text{with $x_{j-1}\leq x_j$ for $1\leq j\leq n$, $x_0:=a$, $x_{n}:=b$}\right\}\!,
\] 
and let $F\colon \mathcal S\to [0,+\infty]$ be the function of $(n-1)$-real variables defined by
\[
F(x_1,\dots, x_n):=\max_{1\leq j\leq n}f_j((x_{j-1},x_{j})).
\] 
From the continuity of every set-function $f_j$ the function $F$ turns out to be continuous on $\mathcal S$. 
The existence of a solution of \eqref{problem} is then equivalent to the existence of a minimizer of the continuous function $F$ on the compact set $\mathcal S$. This last assertion is 
an immediate consequence of the Weierstrass extreme value theorem.

\smallskip
\emph{Necessary condition.} Let $\{I_j\}\in \mathcal C_n(I)$ be a solution of problem \eqref{problem}. Define the quantity $M:=\max_{1\leq j\leq n} f_j(I_j)$ and denote by $O$ the set of those $j$ corresponding to optimal intervals $I_j$ such that $f_j(I_j)=M$ and by $O^c$ its complement $\{1,\dots,n\}\setminus O$. By contradiction, assume that \eqref{optimality} does not hold, namely that  
$O^c\neq \emptyset$,
and apply the following procedure to reach the contradiction.
\begin{itemize}
\item[-] Consider an optimal interval $I_{j_1}$ with $j_1\in O$, which is adjacent to a non-optimal one $I_{j_2}$ with $j_2\in O^c$, thus by construction $f_{j_1}(I_{j_1})>f_{j_2}(I_{j_2})$. By Remark~\ref{rem.equ} if the functions $\{f_j\}$ are increasing then $I_{j_1}\neq \emptyset$; 
if on the contrary the functions $\{f_j\}$ are decreasing then $I_{j_2}\neq \emptyset$. 
\item[-] Move the endpoint shared by $\overline{I}_{j_1}$ and $\overline{I}_{j_2}$, according to the monotonicity of the functions $\{f_j\}$: if the functions $\{f_j\}$ are increasing shrink $I_{j_1}$ and enlarge $I_{j_2}$; on the contrary if the functions $\{f_j\}$ are  decreasing shrink $I_{j_2}$ and enlarge $I_{j_1}$  (notice that,  according to the monotonicity of the $\{f_j\}$s, it is always possible to shrink these sets, thanks to the previous step). In both cases, by the continuity of the functions $\{f_j\}$, the value $f_{j_2}(I_{j_2})$ can be slightly increased such that $f_{j_2}(I_{j_2})< M$ while $f_{j_1}(I_{j_1})$ is decreased.
\item[-] If $\sharp(O)=1$ (that is, if $I_{j_1}$ was the only optimal interval) then also $M$ has decreased, which would be a contradiction with the optimality of $\{I_j\}$. Else if $\sharp(O)>1$, we remove $j_1$ from $O$ and the cardinality of $O$ is decreased of one unit. Therefore, repeat this procedure from the first step until $\sharp(O)=1$ to get the contradiction.
\end{itemize}

\smallskip
\emph{Sufficient condition.} Given a partition $\{I_j\}\in\mathcal C_n(I)$ satisfying the condition \eqref{optimality}, we want to prove that $\{I_j\}$ solves \eqref{problem}. Consider a minimizer $\{\widehat{I}_j\}\in\mathcal C_n(I)$ for problem \eqref{problem}. By Lemma~\ref{lemma:incl} there exist two indeces $j_1$ and $j_2$ such that $I_{j_1}\subseteq \widehat{I}_{j_1}$ and $\widehat{I}_{j_2}\subseteq I_{j_2}$.
Then, by \eqref{optimality}, according to the monotonicity of the functions $f_j$'s either
\begin{equation}\label{abstract1}
\max_{1\leq j\leq n} f_j(I_j)=f_{j_1}(I_{j_1})\leq f_{j_1}(\widehat{I}_{j_1})\leq \max_{1\leq j\leq n} f_j(\widehat I_j)
\end{equation}
or
\begin{equation}\label{abstract2}
\max_{1\leq j\leq n} f_j(I_j)= f_{j_2}(I_{j_2})\leq f_{j_2}(\widehat{I}_{j_2})\leq \max_{1\leq j\leq n} f_j(\widehat I_j).
\end{equation}
In both cases the minimality of $\{\widehat I_j\}$ implies that also $\{I_j\}$ solves \eqref{problem}.

\smallskip
\emph{Uniqueness.}
Let $\{I_j\}$ and $\{\widehat{I}_j\}$ be two distinct partitions in $\mathcal C_n(I)$ that minimize \eqref{problem}. From the necessary condition we know that these minimizers satisfy \eqref{optimality}.
If $\{I_j\}$ and $\{\widehat{I}_j\}$ are distinct partitions, by Lemma~\ref{lemma:incl} there exist two indeces $j_1$ and $j_2$ such that $I_{j_1}\subsetneq \widehat{I}_{j_1}$ and  $\widehat I_{j_2}\subsetneq {I}_{j_2}$. But arguing as for the sufficient condition, from \eqref{optimality} and the {strict} monotonicity of the functions $\{f_j\}$, either \eqref{abstract1} or \eqref{abstract2} holds with a strict inequality; this would violate the equality $\max_{1\leq j\leq n} f_j(I_j)=\max_{1\leq j\leq n} f_j(\widehat I_j)$. Then, the only possibility is that $\{I_j\}$ and $\{\widehat{I}_j\}$ are the same partition.
\end{proof}

For the existence of a solution lower semicontinuity of the functions would suffice; however, for the optimality condition the continuity is necessary. The procedure in the necessary condition can also be iterated to construct partitions that approximate the optimal one.

For particular families of set-function the solution of \eqref{problem} can be explicitly identified.

\begin{corollary}[Uniform partition]\label{cor.uniform}
Let $g\colon [0,+\infty]\to [0,+\infty]$ be  a strictly monotone continuous function. Let $n\in\mathbb N$ and $\{f_j\}_{j=1}^n$ be the family of set-functions such that $f_j=g\circ \mathcal L$, namely $f(J)=g(\mathcal L(J))$ for all intervals $J\subseteq I$.
Then the minimax problem \eqref{problem} is uniquely solved by the \emph{uniform partition}:
\[
I_j=\left(\frac{j-1}{n},\frac{j}{n}\right)\quad  \text{for all $j=1,\dots , n$.}
\]
\end{corollary}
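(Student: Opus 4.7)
The plan is simply to check that the hypotheses of Theorem~\ref{teo.main} apply to the family $\{f_j\} = \{g \circ \mathcal{L}\}$ and then to read off the uniform partition from the optimality condition \eqref{optimality}. Note that the statement implicitly takes $I=(0,1)$.

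First I would verify that each $f_j = g \circ \mathcal{L}$ is a continuous and strictly monotone set-function. Continuity follows at once from Definition~\ref{def.con}: the map $(x,y)\mapsto \mathcal{L}((x,y)) = y-x$ is continuous on $\{x\le y\}$, and composition with the continuous function $g$ preserves this. For strict monotonicity, if $J_1 \subsetneq J_2$ are open subintervals of $I$, then $J_2\setminus \overline{J_1}$ contains a non-empty open subinterval, so $\mathcal{L}(J_1) < \mathcal{L}(J_2)$; strict monotonicity of $g$ then gives $f_j(J_1) \ne f_j(J_2)$, with the correct direction of inequality. Thus $f_j$ is strictly increasing if $g$ is, and strictly decreasing if $g$ is.

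Next, compatibility of $f_j$ and $f_{j+1}$ in the sense of Definition~\ref{def.com} is automatic since all the $f_j$ coincide: they have the same monotonicity and take the common value $g(0)$ on the empty set. Hence Theorem~\ref{teo.main} applies and yields a unique solution $\{I_j\}$ of \eqref{problem} characterized by the optimality condition \eqref{optimality}, namely $g(\mathcal{L}(I_{j_1})) = g(\mathcal{L}(I_{j_2}))$ for all indices $j_1,j_2$.

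Since $g$ is strictly monotone, it is injective, so \eqref{optimality} forces $\mathcal{L}(I_j)$ to be independent of $j$. Combined with $\sum_{j=1}^n \mathcal{L}(I_j) = \mathcal{L}(I) = 1$, this gives $\mathcal{L}(I_j)=1/n$ for every $j$, and since the intervals of a partition in $\mathcal{C}_n(I)$ are arranged from left to right with $x_0=0$ and $x_n=1$, the only possibility is $I_j = ((j-1)/n, j/n)$. No step here is a real obstacle; the whole argument is a direct application of Theorem~\ref{teo.main} once one observes that $g \circ \mathcal{L}$ inherits continuity and strict monotonicity from $g$.
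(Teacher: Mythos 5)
Your proposal is correct and follows essentially the same route as the paper: verify that $g\circ\mathcal L$ satisfies the hypotheses of Theorem~\ref{teo.main} (the paper is terser here, simply invoking the hypothesis on $g$ and Remark~\ref{rem.com}), then read the optimality condition \eqref{optimality} as $g(\mathcal L(I_{j_1}))=g(\mathcal L(I_{j_2}))$ and use the injectivity of $g$ to conclude that all the intervals have equal length. Your additional remarks --- that $I=(0,1)$ is implicit and that strict inclusion of intervals forces strictly smaller Lebesgue measure --- are correct details that the paper leaves unstated.
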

\begin{proof}
The hypothesis on $g$ with Remark~\ref{rem.com} ensure the family $\{f_j\}$ to fulfill the assumptions of Theorem~\ref{teo.main}.
Then the optimality condition \eqref{optimality} reads as $g(\mathcal L(I_{j_1}))=g(\mathcal L(I_{j_2}))$ for all $1\leq j_1,j_2\leq n$. By exploiting the invertibility of the function $g$ one deduces $\mathcal L(I_{j_1})=\mathcal L(I_{j_2})$ for all $1\leq j_1,j_2\leq n$. This  concludes the proof.
\end{proof}

\begin{remark}\label{rem.inv}
If $\{f_j\}$ is a family of set-functions satisfying the hypothesis of Theorem~\ref{teo.main}, then also the family $\{1/f_j\}$ satisfy these hypothesis with reverse monotonicity. Then problem
$$\min_{\{I_j\}\in \mathcal C_n(I)}\max_{1\leq j\leq n} \frac{1}{f_j(I_j)}$$
is equivalent to \eqref{problem}, in the sense that these two problems have the same solutions. Indeed the optimality condition for this problem is
$$\frac{1}{f_{j_1}(I_{j_1})}=\frac{1}{f_{j_2}(I_{j_2})}, \quad \text{for all $1\leq j_1,j_2\leq n$},$$
which is clearly equivalent to \eqref{optimality}.
\end{remark}

Another interesting application is the equivalence of minimax and maximin partition problems.
\begin{corollary}[Maximin partition problems]\label{cor.maxmin}
Let $n\in\mathbb N$ and $\{f_j\}_{j=1}^n$ be a family of set-functions such that, for every $1\leq j\leq n$,
\begin{itemize}
\item $f_j$ is continuous and monotone;
\item $f_j$ and $f_{j+1}$ are compatible.
\end{itemize}
There exists a solution to the maximin partition problem \eqref{problem2}. A partition $\{I_j\}\in \mathcal C_n(I)$ solves \eqref{problem2} if and only if
\begin{equation*}
f_{j_1}(I_{j_1})=f_{j_2}(I_{j_2})\quad  \text{for all $1\leq j_1,j_2\leq n$.}
\end{equation*}
If, in addition, all the set-functions $f_j$ are \emph{strictly} monotone, then the solution is unique.
\end{corollary}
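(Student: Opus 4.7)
The plan is to reduce this corollary to Theorem~\ref{teo.main} by passing to the reciprocal set-functions $\{1/f_j\}$, very much in the spirit of Remark~\ref{rem.inv}. With the extended-reals conventions $1/0 = +\infty$ and $1/(+\infty) = 0$, each $1/f_j$ is a set-function with range in $[0,+\infty]$ in the sense of Definition~\ref{def.set}.

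The first step is to verify that the family $\{1/f_j\}$ still satisfies the hypotheses of Theorem~\ref{teo.main}. Since $t \mapsto 1/t$ is a homeomorphism of the compactification $[0,+\infty]$, the continuity of each $f_j$ in the sense of Definition~\ref{def.con} transfers to $1/f_j$. Monotonicity is preserved with the direction reversed, which Theorem~\ref{teo.main} accommodates. Compatibility (Definition~\ref{def.com}) is also preserved, since if all the $f_j$ share a common value $c\in[0,+\infty]$ on the empty set, then all the $1/f_j$ share the common value $1/c$. Strict monotonicity is preserved as well, for the same reason that $t \mapsto 1/t$ is itself strictly monotone.

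The second step is the elementary identity
\[
\min_{1 \le j \le n} f_j(I_j) \;=\; \frac{1}{\displaystyle\max_{1 \le j \le n} \frac{1}{f_j(I_j)}},
\]
valid for every partition $\{I_j\} \in \mathcal C_n(I)$ under the above conventions. Because $t \mapsto 1/t$ is order-reversing, a partition maximizes the left-hand side if and only if it minimizes the denominator on the right. Hence the maximin problem \eqref{problem2} for $\{f_j\}$ and the minimax problem \eqref{problem} for $\{1/f_j\}$ share exactly the same set of optimal partitions (and have reciprocal optimal values).

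The conclusion of the corollary then follows immediately from Theorem~\ref{teo.main} applied to $\{1/f_j\}$: existence of a minimizer of the associated minimax problem, the optimality characterization $1/f_{j_1}(I_{j_1}) = 1/f_{j_2}(I_{j_2})$ (plainly equivalent to $f_{j_1}(I_{j_1}) = f_{j_2}(I_{j_2})$) for all $j_1,j_2$, and uniqueness under strict monotonicity. The only point requiring genuine care, and what I expect to be the main (if modest) obstacle, is the handling of the boundary values $0$ and $+\infty$ under inversion: one should confirm that the continuity of $1/f_j$ survives in the cases where $f_j$ attains these values, which is exactly why it is convenient to view $[0,+\infty]$ as a compact topological space on which $t \mapsto 1/t$ is a homeomorphism. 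A purely direct alternative, mirroring the proof of Theorem~\ref{teo.main} with inequalities reversed and with the roles of ``shrink'' and ``enlarge'' exchanged in the necessary condition, would avoid this issue but essentially duplicate the earlier argument.
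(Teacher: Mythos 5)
Your proposal is correct and follows essentially the same route as the paper: the authors also prove Corollary~\ref{cor.maxmin} by invoking the identity $\max\min f_j = 1/\min\max(1/f_j)$ and appealing to Remark~\ref{rem.inv}, which records exactly the reduction to Theorem~\ref{teo.main} for the reciprocal family $\{1/f_j\}$ with reversed monotonicity. Your additional care about continuity of $t\mapsto 1/t$ on the compactified half-line $[0,+\infty]$ is a welcome elaboration of a point the paper leaves implicit.
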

\begin{proof}
From the equality 
\[
\frac{1}{\displaystyle \max_{\{I_j\}\in \mathcal C_n(I)}\min_{1\leq j\leq n} f_j(I_j)}=\min_{\{I_j\}\in \mathcal C_n(I)}\max_{1\leq j\leq n} \frac{1}{f_j(I_j)},
\]
one can see that the two problems are equivalent, admitting the same solutions. By Remark~\ref{rem.inv} the thesis follows.
\end{proof}

In view of the previous results several optimal partition problems can be considered.

\begin{example}[Optimal partitioning for measures]
Let $n\in\mathbb N$ and for every $1\leq j\leq n$ let $\mu_j$ be a non-atomic probability measure over $\overline I$. By Remark~\ref{rem.meas} one can let in \eqref{problem} $f_j=\mu_j$ and study
\[
\min_{\{I_j\}\in \mathcal C_n(I)}\max_{1\leq j\leq n}\mu_j(I_j).
\]
and also the dual problem
\[
\max_{\{I_j\}\in \mathcal C_n(I)}\min_{1\leq j\leq n}\mu_j(I_j).
\]
By Theorem~\ref{teo.main} and Corollary~\ref{cor.maxmin} every optimal partition must satisfy $\mu_{j_1}(I_{j_1})=\mu_{j_2}(I_{j_2})$ for all $1\leq j_1,j_2\leq n$. The solution is unique whenever the measures $\mu_j$ are strictly increasing.
These partitioning are classical and linked to problems of \emph{fair division} of an object, see for instance \cite{dalewi, elhike, lewi}.
A famous cake-cutting interpretation by Dubins and Spanier \cite{dubspa} is as follows. Suppose a cake $I$ is to be divided among $n$ people whose values $\mu_j$ of different portions of the cake may differ (i.e., $\mu_j(I_j)$ represents the value of the piece $I_j$ to person $j$). In \cite{dubspa} it was shown that if not all the values $\mu_j$ are identical, there is always a partition of $I$ into $n$ pieces so that each person receives a piece he values strictly more than $1/n$, what is known as \emph{equitable fair-cutting}.
\end{example}

\begin{example}[Optimal location problems]
By Remark~\ref{rem.dist} one can consider the optimal partition problems
\[
\min_{\{I_j\}\in \mathcal C_n(I)}\max_{1\leq j\leq n}\int_{I_j} \rho(x)\mathrm{d}_{\partial I_j}(x)^r dx
\]
or 
\[
\min_{\{I_j\}\in \mathcal C_n(I)}\max_{1\leq j\leq n} \max_{x\in I_j}(\rho(x)\mathrm{d}_{\partial I_j}(x)).
\]
By Theorem~\ref{teo.main} the solutions of these problems are unique: the former satisfies the optimality conditions $\int_{I_{j_1}} \rho(x)\mathrm{d}_{\partial I_{j_1}}(x)^r dx=\int_{I_{j_2}} \rho(x)\mathrm{d}_{\partial I_{j_2}}(x)^r dx$ while the latter $\max_{x\in I_{j_1}}(\rho(x)\mathrm{d}_{\partial I_{j_1}}(x))=\max_{x\in I_{j_2}}(\rho(x)\mathrm{d}_{\partial I_{j_2}}(x))$ for all $1\leq j_1,j_2\leq n$.
The latter problem can also be rewritten as
\begin{equation}\label{facility}
\min_{\{I_j\}\in \mathcal C_n(I)} \max_{x\in I}(\rho(x)\mathrm{d}_{E}(x))
\end{equation}
where $E:=\cup_j\partial I_j$ is a set made up of $n-1$ points in $\overline I$. Notice that these optimal partition problems are tightly related to some well-known \emph{optimal location problems} (see \cite{bojima,busava, suzdre,suzoka} and references therein) where the points (and not the partitions) are the unknowns in the optimization problem. In this terms problem \eqref{facility} may have the following  interpretation from \emph{urban planning}: if $\rho$ denote the density of a population that live along a road $I$ and if the $n-1$ points $\{x_j\}\in E$ represents some facilities (e.g., petrol stations, metro stations, supermarkets, \dots) that can be builded along the road $I$, with problem \eqref{facility} the builder is looking for the best position to place the facilities so as to minimize the (average or maximum) distance the people have to cover to reach the nearest facility. 
\end{example}

\begin{example}[Spectral partitions for Sturm-Liouville problems]
Remarkable optimal partition problems are the ones concerning the eigenvalues of Sturm-Liouville problems. By Remark~\ref{rem.eigen} we can let $f_j=\lambda_{n_j}$ for some $n_j\in\mathbb N$ and study
\[
\min_{\{I_j\}\in \mathcal C_n(I)} \max_{1\leq j\leq n}\lambda_{n_j}(I_j),
\]
or also, by Corollary~\ref{cor.maxmin}, the dual problem
\[
\max_{\{I_j\}\in \mathcal C_n(I)} \min_{1\leq j\leq n}{\lambda_{n_j}(I_j)}.
\]
Theorem~\ref{teo.main} then applies: for both problems the solution is unique and  satisfies $\lambda_{n_{j_1}}(I_{j_1})=\lambda_{n_{j_2}}(I_{j_2})$ for all $1\leq j_1,j_2\leq n$. Versions of these problems in higher dimension for the Laplace operator have applications to the phenomenon of the spatial segregation in reaction-diffusion systems \cite{bfvv,caflin,coteve, hehote}. 
 
By the way, many others \emph{spectral partition problems} can be considered, such as the ones involving the eigenvalues of \emph{singular operators}, \emph{nonlinear operators} (i.e., the $p$-Laplacian for $1<p<+\infty$), \emph{higher order} operators (i.e., the bi-Laplacian) or \emph{fractional operators} (i.e., the $s$-Laplacian for $0<s<1$). Notice that the \emph{torsional rigidity} (also called the \emph{compliance}) may be considered as well.
\end{example}

At last, we point out that one can also consider different kind of set-functionals on each interval of the partition (e.g. for $n=1$ a measure on the first interval and an eigenvalue on the second interval) leading to \emph{mixed} optimal partition problems.

\section{Asymptotic distribution of optimal partitions}\label{sec.3}
We discuss in this section the asymptotics of the optimal partitions of \eqref{problem} when the number $n$ of intervals of the partition goes to $+\infty$. For the sake of clarity we only consider the case $f_j=f$ for all $1\leq j\leq n$, for  a \emph{given} set-function $f$ independent of $n$; the more general situation of a family of functions $\{f_j^n\}$ that may change from time to time as $n$ increases should be treated with some effort, but with no substantially new ideas. We thus focus on the limiting behaviour as $n\to\infty$ of the minimum problem
\begin{equation}\label{problembis}
\min_{\{I_j\}\in \mathcal C_n(I)}\max_{1\leq j\leq n} f(I_j).
\end{equation}
Since now the parameter $n$ varies it is convenient to emphasize the dependence of any optimal partitions on $n$ by a superscript, namely for every $n\in\mathbb N$ we denote by $\{I_j^n\}$ a solution of \eqref{problembis}. 
Then, to keep track of the asymptotic distribution of the optimal partitions, for every $n\in\mathbb N$ we associate to a minimizer $\{I_j^n\}\in\mathcal C_n(I)$ of \eqref{problembis} the probability measure $\mu_{n}\in\mathcal P(\overline I)$ defined as
\begin{equation}\label{meas}
 \mu_{n}=\frac{1}{n-1}\sum_{1\leq j\leq n-1}\delta_{x_j^n},
\end{equation}
where $\{x_j^n\}$ is the family of points that identify the optimal partition $\{I_j^n\}$ and $\delta_x$ denotes the Dirac delta supported at a point $x\in I$ (the normalization factor $n-1$ guarantees $\mu_n$ to be a probability measure). Notice that this association is a common strategy that has been used in similar contexts (see for instance \cite{bojima, busava, tilzuc,tilzuc3, tilzuc2}).
Now if $n\to\infty$, by the compactness of the space of probability measures $\mathcal P(\overline I)$, we may assume that, up to subsequences, the probability measures $\{\mu_n\}$ defined in \eqref{meas} weakly* converge to some probability measure $\mu$. The purpose of this section is therefore the identification of such a limiting measure $\mu$.

For the asymptotics we need the following notation. 
\begin{definition}[Radon-Nikodym condition]\label{def.rn}
A monotone set-function $f$ has the Radon-Nikodym property if there exists a function $s\in L^1(I)$, $s>0$ a.e. on $I$, such that if $f$ is increasing there holds 
\begin{equation}\label{eq.rn}
\lim_{J\downarrow x}\frac{f(J)}{\mathcal L( J)}= s(x), \quad \text{for a.e. $x\in I$};
\end{equation}
if $f$ is decreasing there holds
\[
\lim_{J\downarrow x}\frac{1}{f(J)\mathcal L( J)}= s(x), \quad \text{for a.e. $x\in I$}.
\]
\end{definition}

The strict positivity of $s$ over the entire $I$ could be relaxed to hold only on a subset of positive Lebesgue measure.

\begin{definition}[Domination]\label{def.dom} A monotone set-function $f$ is said \emph{dominated} if there exists a function $d\in L^1(I)$ such that if $f$ is increasing there holds 
\[
\sum_{1\leq j\leq n} \frac{f(I_j)}{\mathcal L(I_j)}\chi_{I_j}(x)\leq d(x), \quad \text{for all $n\in \mathbb N$, all $\{I_j\}\in\mathcal C_n$, and for a.e. $x\in I$};
\]
if $f$ is decreasing there holds 
\begin{equation*}
\sum_{1\leq j\leq n} \frac{1}{f(I_j)\mathcal L(I_j)}\chi_{I_j}(x)\leq d(x), \quad \text{for all $n\in \mathbb N$, all $\{I_j\}\in\mathcal C_n$, and for a.e. $x\in I$}.
\end{equation*}
\end{definition}

By the positivity of $f$ the function $s$ and $d$ are positive as well.

\begin{remark}\label{rem.asy}
We assume the previous two ratios in Definition~\ref{def.dom} to be zero when they are not well-defined; this problem only occurs when $I_j$ is unbounded, $f(I_j)=+\infty$ as $f$ increasing, and $f(I_j)=0$ as $f$ decreasing (notice that in Definition~\ref{def.rn} this problem does not subsist since $J$ can always be taken bounded).
\end{remark}

\begin{remark}[Measures]
The Radon-Nikodym condition in Definition~\ref{def.rn} is inspired by the classical one that holds for measures. It is well known \cite[Theorems 7.10]{rudin}) that if $f=\mu$ for some positive finite measure $\mu$ on $I$ then \eqref{eq.rn} holds for a.e. point $x\in I$ with $s=\mu^a$, the \emph{absolutely continuous} part of $\mu$ with respect to the Lebesgue measure. The points for which this limit exists are the so-called \emph{Lebesgue points} of $\mu$. Notice that if $\mu$ is arbitrary we may also have $\mu^a=0$ on $I$.
\end{remark}

\begin{remark}[Eigenvalues]\label{rem.eigen2}
Remarkable set-functions satisfying  Definitions \ref{def.rn} and \ref{def.dom} are the eigenvalues of Sturm-Liouville operators with a \emph{variational} structure. Let $r,t\in (1,+\infty)$, $\beta>1$ and $p,q,w\in L^\infty(I)$ such that $1/\beta\leq p,w\leq \beta$ and $0\leq q\leq \beta$. For every interval $J\subseteq I$, in case $I$ is bounded one can define the eigenvalue
\[
\lambda_1^{r,t}(J):= \min_{\begin{subarray}{c}u\in W^{1,r}_0(D)\\ u\neq 0\end{subarray}}\bigg\{\frac{\int_{J} p(x)u'(x)^r dx+ \int_J q(x) u(x)^r dx}{(\int_{J}w(x)u(x)^t dx)^{{r}/{t}}}\bigg\}.
\]
For $r=t=2$ coincides with the first eigenvalue of the Sturm-Liouville problem considered in Remark~\ref{rem.eigen}, while for $r=2$ and $t=1$ it is the so-called \emph{compliance} functional (the other values correspond to some eigenvalues of non-linear operators).
Then let $f$ be the set function $(\lambda_1^{r,t})^{1/r}$.
For Definition~\ref{def.rn} we can refer to the limit proved in \cite[Lemma~2.1]{tilzuc3} where it has been identified the function $s=\sqrt{w/p}$. Definition~\ref{def.dom} holds thanks to the boundedness assumptions on $I$ and on the coefficients $p,q,w$. Indeed, by \eqref{eigenvalue} we have
\[
\sum_{j=1}^n \frac{1}{\lambda_1(I_j)^{1/r}\mathcal L (I_j)}\chi_{I_j}(x)\leq C{\beta^{1/r+1/t}} \quad \text{for a.e. $x\in I$,}
\]
for a suitable constant $C$, depending only on $I$, $r$, and $t$.
\end{remark}

The main result we prove in this section is the following one.   

\begin{theorem}[Asymptotic distribution]\label{teo.main2}
Let $f$ be a monotone continuous set-function with the Radon-Nikodym property and which is dominated. For every $n\in \mathbb N$ let $\{I_j^n\}$ denote an optimal partition of \eqref{problembis}. As $n \to \infty$, the probability measures $\{\mu_{n}\}_n$ associated to $\{I_j^n\}$ via \eqref{meas} converge in the weak* topology on $\mathcal P(\overline I)$ to the probability measure $\mu$, absolutely continuous with respect to the Lebesgue measure, with density given by
\begin{equation}\label{minimum}
\mu^a(x)=\frac{s(x)}{\int_I s(t)dt}.
\end{equation}
In particular, for every open set $J\subseteq I$,
\begin{equation}\label{portion}
\lim_{n\to\infty} \frac{\sharp (\{I_j^n\}\cap J)}{n}=\frac{\int_J s(x)dx}{\int_I s(x)dx}.
\end{equation}
Moreover, when $f$ is increasing then
\begin{equation}\label{value}
 \lim_{n\to\infty} n\max_{1\leq j\leq n} f(I_j^n)=\int_I s(x)dx;
\end{equation}
when $f$ is decreasing then
\begin{equation}\label{value2}
 \lim_{n\to\infty} \frac{1}{n}\max_{1\leq j\leq n}f(I_j^n)=\frac{1}{\int_I s(x)dx}.
\end{equation}
\end{theorem}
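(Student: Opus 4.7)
The plan hinges on the equipartition identity delivered by Theorem~\ref{teo.main}: with $M_n:=\max_{1\le j\le n}f(I_j^n)$,
\[
f(I_j^n)=M_n\quad\text{for every }1\le j\le n.
\]
I treat the increasing case; the decreasing case reduces to it by applying Remark~\ref{rem.inv} to $1/f$, whose Radon--Nikodym density and domination coincide with those postulated for $f$ decreasing, and whereby \eqref{value2} becomes the translation of \eqref{value}. Integrating the domination inequality over $I$ gives $\sum_{j:\, I_j^n\text{ bounded}}f(I_j^n)\le \|d\|_{L^1(I)}$; at most two pieces of any partition are unbounded, so $(n-2)M_n\le\|d\|_{L^1(I)}$, hence $M_n=O(1/n)\to 0$. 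Continuity of $f$ and $s>0$ a.e.\ then force $\max_j\mathcal L(I_j^n)\to 0$ on every compact $K\subset I$: were some subsequence $I_{j_n}^n\subset K$ to have length $\ge\varepsilon$, by compactness it would converge to an interval $J$ of length $\ge\varepsilon$, yielding $M_n=f(I_{j_n}^n)\to f(J)$ with $f(J)>0$ (by the Radon--Nikodym property and monotonicity, $J$ having positive length and $s>0$ a.e.), contradicting $M_n\to 0$. The same argument, applied to the two end-pieces $I_1^n,I_n^n$ when $I$ is unbounded, forces their finite endpoints to $\partial I$, so each end-piece eventually leaves every compact $K\subset I$.

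With shrinking secured, for a.e.\ $x\in I$ the piece $I_{j(n,x)}^n\ni x$ is eventually bounded and shrinks to $x$, hence
\[
\sum_{j=1}^n \frac{f(I_j^n)}{\mathcal L(I_j^n)}\chi_{I_j^n}(x)\longrightarrow s(x)\quad\text{for a.e.\ }x\in I,
\]
with end-pieces contributing $0$ via Remark~\ref{rem.asy}. Dominated convergence with envelope $d\in L^1(I)$ yields $\sum_{j:\,I_j^n\text{ bounded}}f(I_j^n)\to\int_I s$; since the unbounded pieces contribute only $O(M_n)=O(1/n)$ to $\sum_j f(I_j^n)=nM_n$, statement \eqref{value} follows: $nM_n\to\int_I s(x)\,dx$. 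For any $\phi\in C_c(I)$ the same argument with the extra factor $\phi$ gives
\[
M_n\sum_{j:\,I_j^n\text{ bounded}}\frac{1}{\mathcal L(I_j^n)}\int_{I_j^n}\phi(x)\,dx\longrightarrow \int_I\phi(x)s(x)\,dx,
\]
and by shrinking plus uniform continuity of $\phi$ these averages differ from $\phi(x_j^n)$ by $o(1)$ uniformly in $j$, so $M_n\sum_{j=1}^{n-1}\phi(x_j^n)\to\int_I\phi s$; dividing by $(n-1)M_n\to\int_I s$ produces
\[
\int_{\overline I}\phi\,d\mu_n=\frac{1}{n-1}\sum_{j=1}^{n-1}\phi(x_j^n)\longrightarrow \frac{\int_I\phi(x)s(x)\,dx}{\int_I s(x)\,dx}\quad\text{for every }\phi\in C_c(I).
\]

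A short tightness check---rerunning the pointwise-plus-domination reasoning on complements of compact subsets of $I$---shows that no mass of $\mu_n$ concentrates at $\partial I$, upgrading the convergence against $C_c(I)$ to weak* convergence on $\mathcal P(\overline I)$ to the absolutely continuous measure with density given by \eqref{minimum}. The counting statement \eqref{portion} follows by sandwiching $\chi_J$ between continuous functions. The principal hurdle is the clean treatment of the (at most two) potentially unbounded end-pieces when $I$ is unbounded: the convention $f(I_j)/\mathcal L(I_j)=0$ of Remark~\ref{rem.asy} is tailor-made for them, but one must still check that their contributions to the sums above and to $\mu_n$ vanish in the limit, which is precisely what the bound $M_n=O(1/n)$ guarantees.
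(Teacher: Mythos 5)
Your proposal is correct, and in its second half it takes a genuinely different route from the paper. The first half coincides: both proofs combine the equipartition identity $f(I_j^n)=M_n$ from Theorem~\ref{teo.main} with the Radon--Nikodym property and dominated convergence to get $nM_n\to\int_I s$, which is the paper's Lemma~\ref{lem.value}; you actually add a step the paper leaves implicit, namely the proof that $\max_j\mathcal L(I_j^n)\to 0$ on compact subsets of $I$, which is what legitimizes invoking the limit $\lim_{J\downarrow x}f(J)/\mathcal L(J)=s(x)$ along the intervals of the optimal partitions. Where you diverge is in identifying the limit measure: the paper extracts a weak* limit $\mu$, proves the localized lower bound \eqref{bb}, deduces $\int_I s\geq\esssup_{x\in I}s(x)/\mu^a(x)$ at Lebesgue points (Lemma~\ref{lem.value2}), and pins down $\mu^a$ through a chain of inequalities whose equality cases also exclude a singular part; you instead compute $\lim_n\int\phi\,d\mu_n$ directly for $\phi\in C_c(I)$, using the equipartition to rewrite $\sum_j f(I_j^n)\mathcal L(I_j^n)^{-1}\int_{I_j^n}\phi$ as $M_n\sum_j\phi(x_j^n)+o(1)$ and dividing by $(n-1)M_n\to\int_I s$. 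Your route is more elementary (no Lebesgue-point localization, no $\esssup$ argument), gives full-sequence convergence at once, and makes the ``tightness check'' essentially automatic since the candidate density already carries unit mass on $I$; the paper's route additionally produces the localized liminf \eqref{bb} that feeds its $\Gamma$-convergence remark. Two details you should write out to make the argument airtight: (a) the bound $(n-2)M_n\leq\|d\|_{L^1(I)}$ tacitly assumes that no interval of an optimal partition is empty, which does hold because the Radon--Nikodym property with $s>0$ a.e.\ forces $f(\emptyset)=0<f(J)$ for every nonempty $J$, and this is incompatible with the equipartition if some $I_j^n=\emptyset$; (b) the replacement of the averages $\mathcal L(I_j^n)^{-1}\int_{I_j^n}\phi$ by $\phi(x_j^n)$ produces a total error of order $nM_n\cdot\omega_\phi\bigl(\max_j\mathcal L(I_j^n)\bigr)$, which vanishes precisely because $nM_n$ stays bounded --- this is worth saying explicitly, since a crude bound $M_n\cdot n\cdot o(1)$ would not obviously close without \eqref{value}.
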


\begin{remark}\label{rem.inc}
Since any minimax problem of decreasing functions can always be reconducted to an \emph{equivalent} minimax problem of increasing functions (just by passing to reciprocals as in Remark~\ref{rem.inv}) it suffices to prove the asymptotics only for increasing functions.
\end{remark}

The first preliminary result we need is the following lemma that guarantees the existence of the limit as $n\to\infty$ of the minimum value of \eqref{problembis}.
\begin{lemma}\label{lem.value}
Let $f$ be an increasing continuous set-function with the Radon-Nikodym property and which is dominated. Fix an interval $J\subseteq I$. For every $n\in \mathbb N$, let $\{I_j^n\}_j$ denote the optimal partition of \eqref{problembis}  and $k_n\geq 1$ denote the number of intervals of the partition $\{I_j^n\}$ with non-empty intersection with $J$.
In case $f$ is increasing then 
\begin{equation}\label{bb}
\liminf_{n\to\infty} k_n \max_{1\leq i\leq k_n} f(I_{j_i}^n)\geq \int_J s(x)dx.
\end{equation}
Moreover, when $J=I$ then $k_n=n$ and
\begin{equation}\label{bb2}
\lim_{n\to\infty} n \max_{1\leq j\leq n} f(I_j^n)= \int_I s(x)dx.
\end{equation}
\end{lemma}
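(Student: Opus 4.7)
The plan is to proceed in three stages: first establish the upper bound $\limsup_n n M_n \leq \int_I s$ (where $M_n:=\max_j f(I_j^n)$) by exhibiting a nearly optimal test partition; then use this to show that the cells of the optimal partition shrink almost everywhere; and finally apply Fatou's lemma and dominated convergence to deduce \eqref{bb} and \eqref{bb2}. Throughout I freely use Theorem~\ref{teo.main}, which yields $f(I_j^n) = M_n$ for every $j$, so that the max over the cells meeting $J$ is simply $M_n$.

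\emph{Upper bound via a test partition.} Since $s\in L^1(I)$ and $s>0$ a.e., the cumulative function $G(x):=\int_a^x s(t)\,dt$ is continuous and strictly increasing on $\overline I$, with $G(b)=\int_I s\,dx<\infty$. I would construct an equidistributing partition $\{\widetilde I_j^n\}$ by setting $G(\widetilde x_j)=jG(b)/n$, so that $\int_{\widetilde I_j^n}s=G(b)/n$. The Radon--Nikodym condition gives $f(K)/\mathcal L(K)\to s(x)$ pointwise a.e.\ as $K\downarrow x$, and the prescribed $s$-mass $G(b)/n$ forces $\mathcal L(\widetilde I_j^n)\to 0$ at every Lebesgue point of $s$ where $s>0$; hence $f(\widetilde I_j^n)\approx s(\widetilde x_j)\mathcal L(\widetilde I_j^n)\approx \int_{\widetilde I_j^n}s=G(b)/n$. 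The \emph{main obstacle} sits here: the Radon--Nikodym convergence is pointwise but not uniform in $x$, so turning it into a uniform bound on $\max_j f(\widetilde I_j^n)$ requires an Egorov/Lusin-type selection, restricting to a large compact subset of $I$ where the convergence is uniform and absorbing the small complement with a separate estimate. Granted this uniformization, $\max_j f(\widetilde I_j^n)\leq(1+o(1))G(b)/n$, and the optimality of $\{I_j^n\}$ gives $\limsup_n nM_n\leq\int_I s$; in particular $M_n\to 0$.

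\emph{Cells shrink almost everywhere.} For each $x\in I$ let $I^n_{j(x)}$ be the cell of $\{I_j^n\}$ containing $x$. If $\mathcal L(I^n_{j(x)})\not\to 0$, then along a subsequence the endpoints of $I^n_{j(x)}$ converge to those of a bounded interval $I^*\ni x$ of positive length; by continuity $f(I^n_{j(x)})\to f(I^*)$. Choosing a point $y\in I^*$ where the Radon--Nikodym limit holds and $s(y)>0$ (a set of full Lebesgue measure in $I^*$), together with a small bounded $K\subseteq I^*$ around $y$, monotonicity and the Radon--Nikodym limit give $f(I^*)\geq f(K)\geq (s(y)/2)\mathcal L(K)>0$ for $K$ small enough, contradicting $M_n\to 0$. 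Hence $\mathcal L(I^n_{j(x)})\to 0$ for a.e.\ $x$, and the Radon--Nikodym property then yields $\phi_n(x):=\sum_j(f(I_j^n)/\mathcal L(I_j^n))\chi_{I_j^n}(x)\to s(x)$ a.e.\ in $I$.

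\emph{Lower bound and conclusion.} Let $j_1,\ldots,j_{k_n}$ index the cells meeting $J$. Since $\mathcal L(I_{j_i}^n\cap J)\leq\mathcal L(I_{j_i}^n)$ and only these cells contribute to $\int_J\phi_n$,
\[
k_n M_n=\sum_{i=1}^{k_n}\frac{f(I_{j_i}^n)}{\mathcal L(I_{j_i}^n)}\mathcal L(I_{j_i}^n)\geq\sum_{i=1}^{k_n}\frac{f(I_{j_i}^n)}{\mathcal L(I_{j_i}^n)}\mathcal L(I_{j_i}^n\cap J)=\int_J\phi_n\,dx.
\]
Fatou's lemma, applied to $\phi_n\geq 0$ with $\phi_n\to s$ a.e., gives $\liminf_n k_nM_n\geq\int_J s\,dx$, which is \eqref{bb}. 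When $J=I$ the displayed inequality is an equality, $nM_n=\int_I\phi_n\,dx$, and the domination hypothesis $\phi_n\leq d\in L^1(I)$ lets me apply dominated convergence to obtain $nM_n\to\int_I s\,dx$, proving \eqref{bb2}.
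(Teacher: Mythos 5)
Your Stages 2 and 3 are, at their core, the paper's own argument: the paper likewise bounds $k_nM_n\geq\sum_{i}\frac{f(I_{j_i}^n)}{\mathcal L(I_{j_i}^n)}\mathcal L(I_{j_i}^n\cap J)$, identifies the right-hand side as $\int_J\phi_n\,dx$, and passes to the limit using the domination hypothesis (it invokes dominated convergence where you invoke Fatou, an immaterial difference for \eqref{bb}); the case $J=I$ is treated identically via the optimality condition \eqref{optimality}. Your Stage 2 is in fact \emph{more} careful than the paper, which asserts $\phi_n(x)\to s(x)$ a.e.\ directly ``from the Radon--Nikodym property'' without verifying that the cells of the optimal partitions actually shrink around a.e.\ point; your contradiction argument supplies exactly that missing justification, and it is correct provided one knows $M_n\to0$.

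The genuine gap is in Stage 1, the only place where you obtain $M_n\to 0$. The Egorov/Lusin uniformization you flag as the ``main obstacle'' is not actually resolved: uniform convergence of $f(K)/\mathcal L(K)\to s(x)$ on a large compact set $E$ controls only those cells $\widetilde I_j^n$ that meet $E$, while on the small complement you would still need $f(\widetilde I_j^n)=O(1/n)$ for \emph{every} such cell (a single bad cell ruins the max), and this does not follow from the equidistribution of the $s$-mass; moreover, even on $E$ the comparison $s(\widetilde x_j)\mathcal L(\widetilde I_j^n)\approx\int_{\widetilde I_j^n}s\,dx$ requires a uniform Lebesgue-point estimate that you have not supplied. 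Fortunately the entire stage is unnecessary: by Theorem~\ref{teo.main} every cell satisfies $f(I_j^n)=M_n$, so integrating the domination inequality of Definition~\ref{def.dom} over $I$ gives $nM_n=\sum_{j}f(I_j^n)=\int_I\phi_n\,dx\leq\int_Id(x)\,dx$, whence $M_n\leq\frac1n\int_Id(x)\,dx\to0$ with no test partition at all. With Stage 1 replaced by this one-line estimate, your proof is complete and coincides with the paper's, with the added merit of making explicit the shrinking-cells step that the paper leaves implicit.
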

\begin{proof}
From the Radon-Nikodym property we have
\[
\lim_{n\to\infty} \sum_{1\leq j\leq n} \frac{f(I_j^n)}{\mathcal L(I_j^n)}\chi_{I_j^n}(x)= s(x),\quad \text{for all $n\in \mathbb N$, all $\{I_j\}\in\mathcal C_n$, and for a.e. $x\in I$}.
\]
Since these functions are dominated by assumption, we can apply the Lebesgue dominated convergence theorem to obtain
\[
\lim_{n\to\infty} \sum_{1\leq j\leq n} \frac{f(I_j^n)}{\mathcal L(I_j^n)}{\mathcal L(I_j^n\cap J)}=\int_Js(x)dx,
\]
where recalling Remark~\ref{rem.asy} the ratio ${f(I_j^n)}/{\mathcal L(I_j^n)}$ has to be meant zero when not well defined.
Then \eqref{bb} immediately follows by $0<\mathcal L(I_j^n\cap J)\leq \mathcal L(I_j^n)$ for $k_n$ intervals $I_j$ (when $I$ is unbounded $1/\mathcal L (I_j^n)=0$ for at most two intervals of the partitions and they are negligible in the asymptotics in $n$ since by the Radon-Nikodym property $k_n\to\infty$ as $n\to\infty$). 
Similarly, when the interval $J$ is replaced by $I$, we have $k_n=n$ and $\mathcal L(I_j^n\cap I)= \mathcal L(I_j^n)$, which combined with the optimality condition \eqref{optimality} yield the limit \eqref{bb2} (again when $I$ is unbounded $1/\mathcal L (I_j^n)=0$ for at most two intervals of the partitions and they are negligible in the asymptotics in $n$).
\end{proof}

We then need a lower bound for the minimum value of \eqref{problem} as $n\to\infty$. This provides information on the absolutely continuous part of $\mu$.

\begin{lemma}\label{lem.value2}
Let $f$ be an increasing continuous set-function with the Radon-Nikodym property and which is dominated. For every $n\in \mathbb N$, let $\{I_j^n\}_j$ denote the optimal partition of \eqref{problembis}. Assume that the measures $\{\mu_n\}$ defined by \eqref{meas} weak* converge to some measure $\mu\in\mathcal P(\overline I)$ and let $\mu^a$ denote the absolutely continuous part of $\mu$ with respect to the Lebesgue measure. Then 
\begin{equation}\label{aa}
\int_I s(x)dx\geq \esssup_{x\in I} \frac{s(x)}{\mu^a(x)}.
\end{equation}
Moreover, $\mu^a(x)>0$ a.e. $x\in I$.
\end{lemma}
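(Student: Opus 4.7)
By Remark~\ref{rem.inc} I may assume $f$ is increasing. The goal is to establish the pointwise a.e. inequality $\mu^a(x)\geq s(x)/\int_I s(t)\,dt$ for a.e. $x\in I$; granted this, both \eqref{aa} and the strict positivity $\mu^a>0$ a.e.\ follow at once from the positivity of $s$.

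The core estimate is a lower bound on $\mu(\overline J)$ for every bounded open interval $J\subset I$. Fix such a $J$, and let $k_n(J)$ denote the number of intervals of the optimal partition $\{I_j^n\}$ with non-empty intersection with $J$. The optimality condition \eqref{optimality} of Theorem~\ref{teo.main} forces every $f(I_j^n)$ to equal $\max_{1\leq j\leq n}f(I_j^n)$, so the bound \eqref{bb} of Lemma~\ref{lem.value} applied to $J$ gives
\[
\liminf_{n\to\infty} k_n(J)\max_{1\leq j\leq n}f(I_j^n)\geq \int_J s(x)\,dx,
\]
while \eqref{bb2} applied to the whole interval $I$ yields $n\max_{1\leq j\leq n}f(I_j^n)\to \int_I s(x)\,dx\in(0,+\infty)$. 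Dividing, $\liminf_n k_n(J)/n \geq \int_J s/\int_I s$. Since $k_n(J)$ differs from $\sharp(\{x_j^n\}\cap J)$ by at most two (the extreme intervals meeting $J$ may carry endpoints outside $J$), the same lower bound holds for $\liminf_n \mu_n(J)$.

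To transfer this bound to $\mu$, I invoke Portmanteau's theorem on the \emph{closed} set $\overline J$: since $\mu_n\weak\mu$, we have $\mu(\overline J)\geq \limsup_n \mu_n(\overline J)\geq \liminf_n \mu_n(J)$, whence $\mu(\overline J)\geq \int_J s/\int_I s$. Then I apply Lebesgue's differentiation theorem along a family $J\downarrow x$ of bounded open intervals whose endpoints avoid the (at most countable) set of atoms of $\mu$: for such $J$ one has $\mu(\overline J)=\mu(J)$, and for a.e. $x\in I$ the quotients $\mu(J)/\mathcal L(J)$ and $\int_J s/\mathcal L(J)$ tend to $\mu^a(x)$ and $s(x)$, respectively. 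This produces $\mu^a(x)\geq s(x)/\int_I s(t)\,dt$ for a.e. $x\in I$, as desired.

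The delicate point is the \emph{direction} of the Portmanteau inequality: weak* convergence only provides lower bounds for $\liminf_n\mu_n(U)$ on open $U$, which is useless for bounding $\mu$ itself from below; enlarging $J$ to its closure $\overline J$ before passing to the limit reverses the inequality in the needed direction. The bookkeeping connecting $k_n(J)$ with $\mu_n(J)$ up to an $O(1)$ error, and the technical restriction to intervals whose endpoints carry no $\mu$-atoms at the differentiation step, are routine.
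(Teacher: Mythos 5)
Your argument is correct and follows essentially the same route as the paper's: both rest on the bounds \eqref{bb}--\eqref{bb2} of Lemma~\ref{lem.value}, the identification $\mu_n(J)=(k_n-1)/(n-1)$ up to an $O(1)$ correction, the upper semicontinuity $\limsup_n\mu_n(\overline J)\leq\mu(\overline J)$ on closed sets, and differentiation at a common Lebesgue point of $s$ and $\mu^a$. The only difference is cosmetic: you divide the two limits from Lemma~\ref{lem.value} directly (so no regularization of the denominator is needed), whereas the paper bounds $n\max_j f(I_j^n)$ by the quotient with $\mu_n(J)+\epsilon$ and sends $\epsilon\to 0$ at the end.
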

\begin{proof}
Fix $\epsilon>0$, a point $x_0\in I$ be a Lebesgue point for $s$ and $\mu^a$, and an interval $J\subseteq I$ centered at $x_0$.
Let $k_n\geq 1$ denote the number of intervals of the partition $\{I_j^n\}$ with non-empty intersection with $J$ so that by \eqref{meas}
\begin{equation*}
\mu_n(J)= \frac{k_n-1}{n-1},
\end{equation*}
and then
\[
n \max_{1\leq j\leq n} f(I_j^n)\geq \frac{n}{n-1}\frac{(k_n-1)\max_{1\leq j\leq k_n} f(I_j^n)}{\mu_n(J)+\epsilon}.
\]
The role of $\epsilon$ is to avoid possible vanishing denominators. By letting $n\to\infty$ in this inequality and combining $\limsup_{n\to\infty} \mu_n(J)\leq \mu(\overline J)$ with \eqref{bb} and \eqref{bb2} one obtains that
\[
\int_I s(x)dx\geq \frac{\int_J s(x)}{\mu(\overline J)+\epsilon}.
\]
Letting now $J$ shrink towards $x_0$, from Radon-Nikodym theorem, we find that
\[
\int_I s(x)dx\geq \frac{s(x_0)}{\mu^a(x_0)+\epsilon}.
\]
Since a.e. point $x_0\in I$ is a Lebesgue point for $s$ and $\mu^a$, from the arbitrariness of $x_0$ and of $\epsilon$ we obtain \eqref{aa}. The last assertion follows from \eqref{aa} and the integrability of the function $s$.
\end{proof}

We are now ready for proving Theorem~\ref{teo.main2}.
\begin{proof}[Proof of Theorem~\ref{teo.main2}]
As previously observed in Remark~\ref{rem.inc} if suffices to prove the theorem when $f$ is increasing. 
By Lemma~\ref{lem.value2} and the fact that $\mu$ is a probability measure we obtain
\[
 \esssup_{x\in I} \frac{s(x)}{\mu^a(x)}\leq \int_I \frac{s(x)}{\mu^a(x)}{\mu^a(x)}dx\leq \esssup_{x\in I} \frac{s(x)}{\mu^a(x)}\int_I\mu^a(x)dx\leq \esssup_{x\in I} \frac{s(x)}{\mu^a(x)}.
\]
and therefore, the chain of inequalities is in fact a chain of equalities. By studying the equality cases one deduces that 
\begin{equation*}
\mu^a(x)=\frac{s(x)}{\int_I s(t)dt},
\end{equation*}
which implies that the limiting measure $d\mu=\mu^a(x)dx $. By definition of weak* convergence of measures and \eqref{meas} we have
\[
\lim_{n\to\infty} \frac{\sharp (\{I_j^n\}\cap J)-1}{n-1}=\frac{\int_J s(x)dx}{\int_I s(x)dx},
\]
which is clearly equivalent to the limit in \eqref{portion}.
The limit \eqref{value} is a consequence of Lemma~\ref{lem.value}.
\end{proof}

\begin{remark}[$\Gamma$-convergence]
Let $F_n\colon\mathcal P(\overline I)\to [0,+\infty]$ and $F_{\infty}\colon\mathcal P(\overline I)\to [0,+\infty]$ be the functionals defined as
\begin{equation*}
F_n(\mu):=
\begin{cases}
\displaystyle n \max_{1\leq j\leq n}f(I_j)\quad &\text{if $\mu=\mu_n$ as in \eqref{meas} with $\{I_j^n\}\in\mathcal C_n(I)$ solution of \eqref{problem},}\\
\quad\; +\infty \quad &\text{otherwise},
\end{cases}
\end{equation*}
and
\begin{equation*}
F_\infty(\mu):=
\begin{cases}
\displaystyle \int_I s(x) dx&\text{if $\mu=s(x)/\int_I s(x)dx$,}\\
\quad \quad +\infty  &\text{otherwise}.
\end{cases}
\end{equation*} 
As a byproduct of Lemmas \ref{lem.value} and \ref{lem.value2} we deduce that as $n\to\infty$ the functionals $F_n$ $\Gamma$-converge with respect to the weak* convergence in the space of probability measures $\mathcal P(\overline I)$ to the functional $F_\infty$ (see \cite{dalmaso} for more information on $\Gamma$-convergence theory). 
This $\Gamma$-limit is tightly related to some supremal functionals derived in \cite{tilzuc} and \cite{tilzuc2} for a maximization problem of the first eigenvalue of an elliptic operator in two dimensions. 
\end{remark}

\section{A maximization problem for Sturm-Liouville eigenvalues}\label{sec.5}

In this section we assume $I$ to be \emph{bounded} and $p,q,w\in L^\infty(I)$ be functions such that $1/\beta\leq p,w\leq \beta$ and $0\leq q\leq \beta$ for some constant $\beta\geq 1$. As previously notices in Remark~\ref{rem.eigen2}, under these assumptions to every open set $J\subseteq I$, different from the empty set,  we can characterize the first eigenvalue of the Sturm-Liouville operator with coefficients $p,q,w$ by means of its \emph{variational} structure:
\begin{equation}\label{eigenvalue}
\lambda_1(J)= \min_{\begin{subarray}{c}u\in H^{1}_0(J)\\ u\neq 0\end{subarray}}\bigg\{\frac{\int_{J} p(x)u'(x)^2 dx+ \int_J q(x) u(x)^2 dx}{\int_{J}w(x)u(x)^2 dx}\bigg\},
\end{equation}
and the \emph{first eigenfuction} $u_1$ as solution to
\begin{equation*}
-(p u_{1}')'+qu_{1}=\la_1({J})w u_{1},
\end{equation*}
with the Dirichlet boundary  condition $u_{1}=0$ on $\partial J$ (notice that $u_1$ is unique, when $J$ is connected, up to a multiplicative factor). By convention, when $J=\emptyset$ we set $\la_1(J)=+\infty$ (this is not merely a convention but it is also consistent with the continuity required by Definition~\ref{def.con}).
In this way we may consider $f_j=(\lambda_1)^{{1}/{2}}$ so that the
maximin problem \eqref{problem2} writes as
\begin{equation}\label{problem3}
\max_{\{I_j\}\in \mathcal C_n(I)}\min_{1\leq j\leq n} \lambda_1(I_j)^{1/2}.
\end{equation}
If $\Sigma_{n}=\{x_1^n,x_2^n,\dots, x_{n-1}^n\}$ denote the set of $(n-1)$ points in $I$ that identify the partition $\{I_j^n\}_{j=1}^n$, then problem \eqref{problem2} admits a peculiar formulation: the functional to be maximized is related to the first eigenvalue $\la_1(I\setminus \Sigma_{n})$ of the disconnected set $I\setminus \Sigma_{n}$, as defined in \eqref{eigenvalue} with $J=I\setminus \Sigma_{n}$.
Indeed, it is well known that the first eigenvalue $\la(I\setminus \Sigma_{n})$ splits over connected components
\begin{equation*}
\la(I\setminus \Sigma_{n})=\min_{1\leq j \leq n+1}\la(I_j^n),
\end{equation*}
therefore \eqref{problem3} admits the equivalent \emph{maximizing} formulation: 
\begin{equation}\label{probmax}
 \max_{\Sigma_n} \la(I\setminus \Sigma_{n})^{1/2},
\end{equation}
where the maximum is computed among all sets $\Sigma_{n}$ made up of $(n-1)$ (non necessarily distinct) points.
An interesting discussion about this problem was raised by Courant and Hilbert in \cite[pp.~463--464]{couhil}.
Two dimensional versions of this maximization problem \eqref{probmax} have also been investigate in some recent works, see \cite{henzuc,tilzuc,tilzuc2}. 

A possible physical interpretations of problem \eqref{probmax} is as follows.
In acoustics, $I$ represents a non-homogeneous string (of density $w$, Young modulus $p$ and subjected to the potential $q$) fixed at its endpoints with a frequency of vibration proportional to the first eigenvalue ${\la_1(I)^{1/2}}$. 
By adding $n-1$ extra points (nails) in the middle, where the string will be supplementarily fixed, the whole string will then vibrate according to the $n$ independent substrings $I_j^n$. 
Therefore, we are asking where best to place the points so as to reinforce a string and maximize its fundamental frequency of vibration $\lambda(I\setminus \Sigma_n)^{1/2}$.

In view of Remarks~\ref{rem.eigen} and \ref{rem.eigen2}, we can rephrase Corollary~\ref{cor.maxmin} and Theorem~\ref{teo.main2} to obtain information on the maximization problem \eqref{probmax}.

\begin{theorem}\label{teo.2}
For every $n\in\mathbb{N}$, there exists a \emph{unique} maximizer $\Sigma_{n}$ of \eqref{probmax}. Moreover, a configuration $\Sigma_{n}$ is the maximizer of \eqref{probmax} if and only 
\begin{equation}\label{eq.optimal}
\la_1(I\setminus \Sigma_{n})=\la_1(I_j^n) \quad \text{for every $1\leq j\leq n$}.
\end{equation}
 
As $n \to \infty$, if $\Sigma_{n}$ is the maximizing sequence of problem \eqref{probmax}, then the probability measures $\mu_{n}$ defined by \eqref{meas} converge
in the weak* topology on $\mathcal P(\overline I)$ to the probability measure $\mu$, absolutely continuous with respect to the Lebesgue measure, with density 
given in \eqref{minimum}. In particular \eqref{portion} holds and
\[
 \lim_{n\to\infty} \frac{n}{\la_1 (I\setminus \Sigma_{n})^{1/2}}=\frac{1}{\pi}
 \int_I\sqrt{\frac{w(x)}{p(x)}}dx.
\]
\end{theorem}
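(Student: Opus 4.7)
The plan is to deduce all three assertions by specializing the abstract results of Sections \ref{sec.2} and \ref{sec.3} to the set-function $f=\lambda_1^{1/2}$, and then translating everything into the eigenvalue language via the splitting identity $\lambda_1(I\setminus\Sigma_n)=\min_{1\leq j\leq n}\lambda_1(I_j^n)$.

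First I would verify that $f=\lambda_1^{1/2}$ fits the framework of Corollary~\ref{cor.maxmin}: by Remark~\ref{rem.eigen} (applied under the hypotheses $1/\beta\le p,w\le\beta$, $0\le q\le\beta$) the set-function $\lambda_1$ is continuous and strictly decreasing on subintervals of $I$, and the monotonic composition $t\mapsto t^{1/2}$ preserves these properties; since in the problem \eqref{problem3} one takes $f_j=f$ for all $j$, the compatibility hypothesis is automatic. Corollary~\ref{cor.maxmin} then yields at once the existence of a maximizer and its uniqueness (thanks to the strict monotonicity), together with the optimality condition $\lambda_1(I_{j_1}^n)=\lambda_1(I_{j_2}^n)$ for every pair of indices. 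Because $\lambda_1(I\setminus\Sigma_n)=\min_{1\le j\le n}\lambda_1(I_j^n)$, the common value in the optimality condition coincides with $\lambda_1(I\setminus\Sigma_n)$, giving \eqref{eq.optimal}; conversely, if \eqref{eq.optimal} holds then all $\lambda_1(I_j^n)$ share the same value and the characterization from Corollary~\ref{cor.maxmin} identifies $\Sigma_n$ as the (unique) maximizer.

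Next I would invoke Theorem~\ref{teo.main2} with the same $f=\lambda_1^{1/2}$. By Remark~\ref{rem.eigen2}, this (decreasing) set-function satisfies both the Radon–Nikodym condition of Definition~\ref{def.rn} and the domination condition of Definition~\ref{def.dom}, the density being proportional to $\sqrt{w/p}$ (precisely $s(x)=\pi^{-1}\sqrt{w(x)/p(x)}$, the factor $\pi$ coming from the standard small-interval asymptotics of the Dirichlet eigenvalue recalled in the reference \cite[Lemma~2.1]{tilzuc3}). Plugging $s$ into \eqref{minimum} gives the density
\[
\mu^a(x)=\frac{\sqrt{w(x)/p(x)}}{\int_I\sqrt{w(t)/p(t)}\,dt},
\]
and the weak* convergence $\mu_n\weak\mu$ together with \eqref{portion} follow directly from Theorem~\ref{teo.main2}.

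Finally, the numerical limit is extracted from \eqref{value2}. By the optimality condition \eqref{eq.optimal} already established, at the maximizer one has $\max_{1\le j\le n}\lambda_1(I_j^n)^{1/2}=\lambda_1(I\setminus\Sigma_n)^{1/2}$, so \eqref{value2} reads
\[
\lim_{n\to\infty}\frac{\lambda_1(I\setminus\Sigma_n)^{1/2}}{n}=\frac{1}{\int_I s(x)\,dx}=\frac{\pi}{\int_I\sqrt{w(x)/p(x)}\,dx},
\]
and taking reciprocals gives the stated asymptotic. The only non-routine point is the correct bookkeeping of the constant in the Radon–Nikodym density $s$; once the reference \cite[Lemma~2.1]{tilzuc3} is invoked to pin down the $\pi$-factor, the rest is a direct translation between the maximin language of Section~\ref{sec.2} and the disconnected-eigenvalue language of \eqref{probmax}.
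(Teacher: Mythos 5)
Your proposal is correct and follows essentially the same route as the paper, which likewise obtains Theorem~\ref{teo.2} by specializing Corollary~\ref{cor.maxmin} and Theorem~\ref{teo.main2} to $f=\lambda_1^{1/2}$ and using the splitting identity $\lambda_1(I\setminus\Sigma_n)=\min_j\lambda_1(I_j^n)$. Your explicit bookkeeping of the constant, $s(x)=\pi^{-1}\sqrt{w(x)/p(x)}$, is in fact more careful than the paper's Remark~\ref{rem.eigen2} (which omits the $\pi$) and is exactly what is needed to match the stated limit via \eqref{value2}.
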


This theorem say that in order to reinforce a non-homogeneous string, there exists a unique maximal configuration which is governed by an optimality condition depending on the three coefficients $p, q, w$ of the Sturm-Liouville operator. 
Moreover, as $n\to\infty$, it is convenient to concentrate the points in those regions of the string at higher density $w$ and lower Young modulus $p$, with a density $\mu^a$  given by \eqref{minimum}, namely proportional to $\sqrt{w/p}$. Observe that the limiting measure $\mu^a(x) dx$ is absolutely continuous with respect to the Lebesgue measure, and that it does not depend on the potential $q$ (thought the optimality condition \eqref{eq.optimal} does).

If $p$ and $w$ are proportionally equivalents, that is $p(x)=c\cdot w(x)$ for a constant $c>0$, we obtain as a limit for the sequence of minimizers the uniform measure on $I$. This happens for example in the case of constant coefficients, for which problem \eqref{probmax} can be explicitly solved. Indeed, if $p, q, w$  are constants and if $J\subseteq I$ is an interval, the eigenvalue defined in \eqref{eigenvalue} has the explicit representation
\begin{equation*}
\la_1(J)=\frac{p}{w}\frac{\pi^2}{\mathcal L(J)^2}+q,
\end{equation*}
which thanks to Corollary~\ref{cor.uniform} implies that the set of equispaced points $\{j/n:\; 1\leq j\leq n-1\}$ is the unique solution to \eqref{probmax}.

\smallskip
The optimality condition \eqref{eq.optimal} stated in Theorem~\ref{teo.2} has a deeper interpretation in terms of higher eigenvalues.
If $n\in \mathbb N$, similarly to \eqref{eigenvalue} to any open set $J\subseteq I$, different from the empty set, we may associate the \emph{$n$-th eigenvalue} of $J$ of the Sturm-Liouville operator with coefficients $p$, $q$, $w$, via the \emph{Courant-Fischer-Weyl min-max principle} \cite{couhil}
\begin{equation}\label{eigenvaluen}
\la_n(J):= \min_{\begin{subarray}{c}U_n\subseteq H^1_0(J)\\ \text{$U_n$ subspace of dim $n$}\end{subarray}} \max_{\begin{subarray}{c}u\in U_n\\ u\neq 0\end{subarray}} \frac{\int_{J} p(x)u'(x)^2dx+ \int_J q(x) u(x)^2 dx}{\int_{J}w(x)u(x)^2 dx},
\end{equation}
and the \emph{$n$-th eigenfuction} $u_n$ a solution to
\begin{equation}\label{ELn}
-(p u_{n}')'+qu_{n}=\la_n({J})w u_{n},
\end{equation}
with the Dirichlet boundary  condition $u_{n}=0$ on $\partial J$. Again by convention, when $J=\emptyset$ we set $\la_n(J)=+\infty$. Notice that the minimum in \eqref{eigenvaluen} is achieved by choosing as $U_n$ the space spanned by the $n$-th first eigenfunctions and that, by orthogonality, $u_1$ is the unique eigenfunction of constant sign when $J$ is connected. A classical result of Sturm states that $u_{n}$ has exactly $(n-1)$ distinct zeros inside $I$ (see \cite{stu, stu2} and also the recent paper \cite{berhel}). The set $Z_n$ of these zeros induce in $I$ a partition $\{I_j^n\}$ made up of $n$ subintervals in which $u_{n}$ has constant sign. Moreover, since $u_{n}$ solves \eqref{eigenvaluen} in $I_j^n$ and vanishes on $\partial I_j^n$, it must be the first eigenfunction in $I_j^n$ with $\la_n(I)=\la_1(I_j^n)$ for every $j=1, \dots, n$. These considerations suggest the following interpretation of the maximizer to \eqref{probmax}.

\begin{theorem}\label{teo.3}
Let $n\in\mathbb N$. A configuration $\Sigma_{n}$ solves \eqref{probmax} if and only if it is the set of the zeros of the $n$-th eigenfunction of the Sturm-Liouville equation \eqref{ELn}.  
\end{theorem}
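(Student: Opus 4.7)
The plan is to combine two ingredients: the complete characterization of maximizers of \eqref{probmax} supplied by Theorem~\ref{teo.2} (the optimality condition \eqref{eq.optimal} is both necessary and sufficient, and the maximizer is unique), and the classical structural information about $u_n$ recalled in the paragraph immediately preceding the statement. Since a maximizer is uniquely characterized by the equality $\lambda_1(I_j^n)=\lambda_n(I)$ across all subintervals, the entire proof reduces to verifying that the zero set $Z_n$ of the $n$-th eigenfunction realizes this equality.

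First I would check that $Z_n$ is an admissible configuration: Sturm's oscillation theorem guarantees that $u_n$ has exactly $n-1$ simple zeros in the interior of $I$, so $Z_n=\{z_1<\dots<z_{n-1}\}$ qualifies as a set $\Sigma_n$. Let $\{I_j^n\}_{j=1}^n$ denote the induced partition of $I\setminus Z_n$. Since $u_n$ solves \eqref{ELn} on $I$ with eigenvalue $\lambda_n(I)$ and vanishes on the boundary of each $I_j^n$, the restriction $u_n|_{I_j^n}$ belongs to $H^1_0(I_j^n)$ and satisfies the same Sturm-Liouville equation with eigenvalue $\lambda_n(I)$. Because the zeros of $u_n$ in $I$ are exactly the points of $Z_n$, the restriction $u_n|_{I_j^n}$ has constant sign; being an eigenfunction of constant sign on a connected interval with Dirichlet data, it must coincide (up to a multiplicative constant) with the \emph{first} Dirichlet eigenfunction on $I_j^n$. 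Consequently $\lambda_1(I_j^n)=\lambda_n(I)$ for every $j=1,\dots,n$.

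Then the identity $\lambda_1(I\setminus Z_n)=\min_{1\le j\le n}\lambda_1(I_j^n)=\lambda_n(I)$ shows that $Z_n$ satisfies the optimality condition \eqref{eq.optimal}. By the ``if'' part of Theorem~\ref{teo.2}, $Z_n$ solves \eqref{probmax}; by the uniqueness part of the same theorem, any maximizer $\Sigma_n$ of \eqref{probmax} must coincide with $Z_n$. This establishes both implications of the statement. The only delicate point is the identification in the second paragraph of $u_n|_{I_j^n}$ as a first eigenfunction on $I_j^n$, which hinges on the fact that the first eigenfunction is the unique (up to scalar multiples) Dirichlet eigenfunction of constant sign on a connected interval; this is a classical consequence of the strict monotonicity in $n$ of $\lambda_n$ combined with $L^2$-orthogonality of the eigenfunctions, and it is precisely the ingredient invoked in the discussion preceding the theorem.
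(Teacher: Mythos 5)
Your proof is correct and follows essentially the same route as the paper: use Sturm's oscillation theorem to see that the zero set $Z_n$ induces a partition on which $u_n$ restricts to a constant-sign (hence first) Dirichlet eigenfunction, so $\lambda_1(I_j^n)=\lambda_n(I)$ for all $j$ and the optimality condition \eqref{eq.optimal} holds, then conclude via the characterization and uniqueness in Theorem~\ref{teo.2}. The only difference is that you spell out the identification of $u_n|_{I_j^n}$ with the first eigenfunction in more detail than the paper, which states it in the discussion preceding the theorem and keeps the proof itself to three lines.
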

\begin{proof}
By Sturm's theorem the set $Z_{n}$ of the zeros of the $n$-th eigenfunction of the Sturm-Liouville equation \eqref{ELn} satisfies $\la_n(I)=\lambda_1(I_j^n)$ for all $j=1,\dots, n$. This condition is verified if and only if \eqref{eq.optimal} holds. Therefore $Z_n=\Sigma_{n}$.
\end{proof}

By combining Theorems \ref{teo.2} with \ref{teo.3} we may also derive some classical results of Sturm-Liouville theory.

\begin{theorem}\label{teo.4}
The following results hold.
\begin{itemize}
\item[(i)]\emph{(Distribution of the zeros of eigenfunctions).} The set of zeros $Z_{n}$ of the $n$-eigenfunction $u_{n}$ in  \eqref{ELn} distributes inside an open interval $J\subseteq I$ as
\[
\lim_{n\to\infty} \frac{\sharp (Z_n\cap J)}{n}= \frac{\int_J \sqrt{w(x)/p(x)}dx}{\int_I \sqrt{w(x)/p(x)}dx}.
\]
\item[(ii)] \emph{(Asymptotics of the eigenvalues).} The eigenvalues \eqref{eigenvaluen} satisfy
\[
\lim_{n\to\infty} \frac{n}{\la_{n}(I)^{1/2}}=\frac{1}{\pi}\int_I\sqrt{\frac{w(x)}{p(x)}}dx.
\]
\item[(iii)] \emph{(Weyl law for the eigenvalues).} The counting function $N$ that to every $\xi\in \mathbb R^+$ associates $N(\xi):=\sup\{n : \text{$\lambda_n$ is as in \eqref{eigenvaluen}}, \lambda_n \leq \xi\}$, namely the function that counts the eigenvalues \eqref{eigenvaluen} (counting their multiplicities) less than or equal to $\xi$,  satisfies 
\[
\lim_{\xi\to\infty} \frac{N(\xi)}{\xi^{1/2}}=\frac{1}{\pi}
 \int_I\sqrt{\frac{w(x)}{p(x)}}dx.
\]
\end{itemize}
\end{theorem}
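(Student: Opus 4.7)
The plan is that all three assertions follow from combining Theorem~\ref{teo.2} and Theorem~\ref{teo.3}, using the identification $s(x)=\sqrt{w(x)/p(x)}$ from Remark~\ref{rem.eigen2}. The key observation is that Theorem~\ref{teo.3} reroutes every statement about the zero set $Z_n$ and about $\lambda_n(I)$ into a statement about the optimal partition $\Sigma_n$ and about $\lambda_1(I\setminus\Sigma_n)$, to which the asymptotic results of Section~\ref{sec.3} apply verbatim.

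For part (i), I would start from Theorem~\ref{teo.3}, which gives $Z_n=\Sigma_n$, so the probability measures $\mu_n$ associated to $Z_n$ via \eqref{meas} coincide with those associated to the maximizers $\Sigma_n$ of \eqref{probmax}. By Theorem~\ref{teo.2}, these measures converge weakly$^*$ in $\mathcal{P}(\overline I)$ to $\mu$ with density $s/\int_I s(t)\,dt = \sqrt{w/p}/\int_I\sqrt{w/p}$. The portion formula \eqref{portion}, rewritten with $Z_n$ in place of $\{I_j^n\}$, yields exactly the limit in (i).

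For part (ii), I would again invoke Theorem~\ref{teo.3}: since $Z_n$ induces a partition $\{I_j^n\}$ on whose intervals $u_n$ is a first Dirichlet eigenfunction, one has $\lambda_n(I)=\lambda_1(I_j^n)$ for every $j$, and therefore $\lambda_n(I)=\min_{1\le j\le n}\lambda_1(I_j^n)=\lambda_1(I\setminus \Sigma_n)$. The last displayed limit of Theorem~\ref{teo.2} then gives
\[
\lim_{n\to\infty}\frac{n}{\lambda_n(I)^{1/2}}=\lim_{n\to\infty}\frac{n}{\lambda_1(I\setminus\Sigma_n)^{1/2}}=\frac{1}{\pi}\int_I\sqrt{\frac{w(x)}{p(x)}}\,dx,
\]
which is precisely (ii). (Strictly, one should check that $\lambda_n(I)^{1/2}$ plays the role of the $f=\lambda_1^{1/2}$ used in Section~\ref{sec.3}, but this is exactly the set-function treated in Remark~\ref{rem.eigen2}.)

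For part (iii), I would derive the Weyl law from (ii) by a standard sandwich. Set $C:=\frac{1}{\pi}\int_I\sqrt{w/p}\,dx$, so that $\lambda_n(I)^{1/2}/n\to 1/C$ by (ii); in particular $\lambda_n(I)\to\infty$. For any $\xi>\lambda_1(I)$, let $N=N(\xi)$, so that $\lambda_N(I)\le\xi<\lambda_{N+1}(I)$ by definition of the counting function and by monotonicity of the sequence $\lambda_n(I)$. Dividing through by $N$ after taking square roots gives
\[
\frac{\lambda_N(I)^{1/2}}{N}\le \frac{\xi^{1/2}}{N}<\frac{\lambda_{N+1}(I)^{1/2}}{N}=\frac{N+1}{N}\cdot\frac{\lambda_{N+1}(I)^{1/2}}{N+1}.
\]
As $\xi\to\infty$ we have $N(\xi)\to\infty$, so both outer quantities tend to $1/C$ by (ii); hence $\xi^{1/2}/N(\xi)\to 1/C$, i.e.\ $N(\xi)/\xi^{1/2}\to C$, which is the claimed Weyl asymptotics. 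I do not anticipate a serious obstacle: the heaviest lifting is already carried out in Theorem~\ref{teo.2}, and the only points that require a bit of care are verifying that $f=\lambda_1^{1/2}$ satisfies the hypotheses of Section~\ref{sec.3} (already done in Remark~\ref{rem.eigen2}) and handling the elementary monotonicity/sandwich argument needed to pass from (ii) to (iii).
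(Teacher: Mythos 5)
Your proposal is correct and follows essentially the same route as the paper: items (i) and (ii) are obtained by combining Theorem~\ref{teo.3} with the asymptotics of Theorem~\ref{teo.2} (via \eqref{portion} and the eigenvalue limit), and item (iii) follows from (ii) by the same sandwich $\lambda_{N(\xi)}(I)\le\xi<\lambda_{N(\xi)+1}(I)$. No meaningful differences from the paper's argument.
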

\begin{proof}
Items (i) and (ii) are an immediate consequence of Theorem \ref{teo.3} with the asymptotics in Theorem \ref{teo.2}, see also \eqref{portion} and \eqref{value2}. To prove (iii) given $\xi\in\mathbb R^+$ let $n$ such that $n=N(\xi)$ such that $$\frac{n}{\la_{n+1}(I)^{1/2}}\leq \frac{N(\xi)}{\xi^{1/2}}\leq \frac{n}{\la_{n}(I)^{1/2}}.$$
Letting $\xi \to \infty$, since also $n\to \infty$, by the limit (ii) one obtains (iii).
\end{proof}

For item (i) of Theorem~\ref{teo.4} one can consult \cite[Chapter X]{ince}, and see also \cite[p.~314]{halmcl} for very good estimates on all the nodes.
Items (ii) and (iii) are also classical and can be found, for instance, in \cite[p.~415]{couhil} (the case of singular operators is contained in \cite{atkmin}, see also \cite[Equation (4.3.3)]{zettl}). We stress that our approach to recover classical results of Sturm-Liouville theory is new and related to purely variational results (such as the $\Gamma$-convergence theory, even if this is not directly used). Therefore, it could be used to exploit other asymptotic formulas; we aim at analyzing  this kind of topics in future works.

\subsection*{Acknowledgements}
This work is part of the Research Project INdAM for Young Researchers (Starting Grant) \emph{Optimal Shapes in Boundary Value Problems} and of the INdAM - GNAMPA Project 2018 \emph{Ottimizzazione Geometrica e Spettrale}.

\end{document}